\documentclass[11pt]{article}

\usepackage[utf8]{inputenc}
\usepackage[T1]{fontenc}
\usepackage{lmodern}
\usepackage[a4paper,margin=1in]{geometry}
\usepackage{amsmath,amssymb,amsthm,mathtools}
\usepackage{microtype}
\usepackage{hyperref}
\usepackage{tabularx,booktabs}
\usepackage{footmisc}
\usepackage[nameinlink]{cleveref} 
\usepackage{listings}
\DeclareUnicodeCharacter{2113}{\ensuremath{\ell}}

\DeclareUnicodeCharacter{03B1}{\ensuremath{\alpha}}   
\DeclareUnicodeCharacter{03B2}{\ensuremath{\beta}}    
\DeclareUnicodeCharacter{03B3}{\ensuremath{\gamma}}   
\DeclareUnicodeCharacter{03C4}{\ensuremath{\tau}}     
\DeclareUnicodeCharacter{03C3}{\ensuremath{\Sigma}}   
\usepackage{newunicodechar}
\newunicodechar{Π}{\(\Pi\)}
\DeclareUnicodeCharacter{220F}{\ensuremath{\prod}}    
\DeclareUnicodeCharacter{221E}{\ensuremath{\infty}} 
\DeclareUnicodeCharacter{2208}{\ensuremath{\in}}    
\DeclareUnicodeCharacter{2261}{\ensuremath{\equiv}} 

\DeclareUnicodeCharacter{0393}{\ensuremath{\Gamma}} 
\DeclareUnicodeCharacter{2264}{\ensuremath{\leq}}     %
\DeclareUnicodeCharacter{2265}{\ensuremath{\geq}}     

\DeclareUnicodeCharacter{2013}{--}  
\DeclareUnicodeCharacter{2014}{---} 
\DeclareUnicodeCharacter{2018}{'}   
\DeclareUnicodeCharacter{2019}{'}   
\DeclareUnicodeCharacter{201C}{``}  
\DeclareUnicodeCharacter{201D}{''}  

\newtheorem{theorem}{Theorem}[section]
\newtheorem{lemma}[theorem]{Lemma}
\newtheorem{proposition}[theorem]{Proposition}
\newtheorem{corollary}[theorem]{Corollary}
\theoremstyle{definition}
\newtheorem{definition}[theorem]{Definition}
\newtheorem{example}[theorem]{Example}
\theoremstyle{remark}
\newtheorem{remark}[theorem]{Remark}

\title{Partition Frequency Moments: Modularity and Congruences}

\author{Hartosh Singh Bal}
\date{}

\begin{document}
\maketitle

\begingroup
\renewcommand\thefootnote{}\footnotetext{%
2020 Mathematics Subject Classification. Primary 11P83, 11F37; Secondary 11F33, 05A17.
\newline\textbf{Keywords.} Partitions, overpartitions, frequency moments, modular forms, half--integral weight, Sturm bound, Ramanujan--type congruences, Dirichlet character twists.}
\addtocounter{footnote}{-1}
\endgroup

\begin{abstract}
We study frequency moments of partition statistics arising from Euler products
$A(q)=\prod_{r\ge1}(1-q^r)^{-c(r)}$ via a transform that expresses the moment
generating functions as $B(q)$ times explicit divisor--sum series determined by
$c(r)$.  When $A(q)$ is modular (typically an $\eta$--quotient), this yields
(quasi)modular forms whose coefficients can be projected to arithmetic
progressions and certified modulo primes by a Sturm bound, giving an effective
pipeline for detecting and proving Ramanujan--type congruences for frequency
moments.

For ordinary partitions we recover and certify several congruences for odd
moments in nonzero residue classes (e.g.\ $M_3(7n+5)\equiv 0\pmod7$ and
$M_3(11n+6)\equiv 0\pmod{11}$).  As a second input, we apply the same
pipeline to overpartitions and certify a family of zero--class congruences
$M_m^{\overline{\ }}(\ell n)\equiv 0\pmod{\ell}$ (including
$m=5,7,11,13$), exhibiting a sharp contrast with the ordinary partition case:
no nonzero residue--class congruences are observed for overpartition moments in
our scan range. We also demonstrate that filtering the statistic via the Glaisher--character dictionary can itself create new Ramanujan--type progressions, e.g.\ a quadratic twist yields the certified congruence $\widehat{M}^{\chi_5}_3(5n+4)\equiv 0\pmod{5}$.

\end{abstract}

\section{Introduction}

The arithmetic of the partition function $p(n)$ has long been intertwined
with modular forms. Ramanujan’s congruences~\cite{Ramanujan1919} and
their extensions by Atkin--Swinnerton-Dyer~\cite{AtkinSwinnertonDyer1954}
and Ono~\cite{Ono2000,Ono2004} illustrate how the simplest generating
function $\eta(\tau)^{-1}$ encodes deep modular phenomena. Yet while
$p(n)$ has been studied for over a century, far less is known about the
arithmetic of \emph{partition frequencies}, which record how often each
part occurs across all partitions of~$n$.

This paper continues and develops the study of frequencies initiated
by Bal--Bhatnagar~\cite{BB21}. Let $F_k(n)$ denote the total number of
occurrences of the part $k$ across all partitions of $n$, and define the
frequency moments
\[
M_m(n)=\sum_{k\ge1} k^m F_k(n).
\]
The Partition--Frequency Enumeration (PFE) matrix of~\cite{BB21} yields
a master transform: weighted frequency sums map canonically to divisor
convolutions $\sigma_m(n;\chi)$, and hence to Eisenstein series
$E_{m+1}(\tau,\chi)$~\cite{Apostol76,Miyake2006}. In this way,
multiplicative number theory enters partition enumeration directly
through the frequency side.  
\emph{Moreover, the methodology is not confined to partitions: whenever
$A(q)$ is an $\eta$--product of known weight and level, the same transform
and congruence pipeline apply to the resulting Euler--type products.
This universality turns the framework into a general-purpose ``congruence
detection machine'' for combinatorial generating functions in the modular range.
As a second demonstrated input we treat overpartitions (an $\eta$--quotient) in
Section~\ref{sec:overpartitions}, where the resulting congruence landscape
differs sharply from the ordinary partition case.}

Three new structural outcomes emerge:

\begin{enumerate}
  \item \textbf{Congruences.} We obtain and certify congruences for higher
  frequency moments. Odd moments give half--integral weight modular forms on
  $\Gamma_0(4)$ with the $\eta$--multiplier, so projection and
  half--integral Sturm bounds~\cite{Kohnen1980,Sturm1987} apply.
  This unifies all Ramanujan--type progressions for $M_{2k-1}$, and,
  in our computational range $1\le m\le99$ and primes $5\le\ell\le97$,
  shows that only the primes $5,7,11$ occur among such progressions.
  The same pipeline extends to suitable Euler products built from
  $\eta$--products of known modular data.
    \item \textbf{A second demonstrated input (overpartitions).}
  We apply the same transform--projection--Sturm pipeline to the overpartition
  Euler product $\overline{P}(q)=(-q;q)_\infty/(q;q)_\infty=\eta(2\tau)/\eta(\tau)^2$.
  This yields certified congruences of the form
  $M_m^{\overline{\ }}(\ell n)\equiv 0\pmod{\ell}$ (Theorem~\ref{thm:overpart-zero})
  and reveals a structural contrast with ordinary partitions: in the range
  $\ell\le 97$ and $m\le 49$ we find no Ramanujan--type progressions in nonzero
  residue classes for overpartition moments (Remark~\ref{rem:overpart-nonzero}).
 \item \textbf{Character twists.} Glaisher’s classical divisor
  selections~\cite{Glaisher1885,Glaisher1891,Andrews76} are exactly
  Dirichlet twists $\sigma_m(\cdot;\chi)$. We build a complete dictionary
  between such filters and twists, so every filtered frequency moment
  corresponds to a modular form at computable level and Nebentypus. As a 
  concrete payoff, we certify a filtered Ramanujan--type progression arising 
  from a quadratic twist, namely $\widehat{M}^{\chi_5}_3(5n+4)\equiv 0\pmod{5}$ (Proposition~\ref{prop:M3hat-legendre5}).

    \item \textbf{Irregular primes as illustrative consequences.} Higher odd       moments interact naturally with classical Bernoulli congruences.  In particular,   $M_{11}$ reflects Ramanujan’s irregular prime $691$, yielding a transparent       convolution reformulation of the mod-$691$ $\tau$--congruence and the standard   $j$--identity.

\end{enumerate}

From the modular--forms side the underlying spaces and lifts are classical:
Eisenstein series, half--integral weight forms, and the Shimura lift
to integral weight go back to Shimura~\cite{Shimura1973} and
Miyake~\cite{Miyake2006}. The novelty here is the \emph{frequency
interpretation}: congruences, twists, and irregularities acquire explicit
combinatorial avatars in terms of parts of partitions. This perspective
advances the partition--frequency program of~\cite{BB21}, unifying
classical recursions~\cite{Ram16,Ford31}, Ramanujan--type congruences,
and quasimodularity~\cite{KanekoZagier1995} under a single framework.

The paper is organised as follows.
Sections~\ref{sec:framework} and~\ref{sec:generalisation} recall the PFE framework
of Bal--Bhatnagar~\cite{BB21} only to fix notation and scope; the proofs of the
PFE identities are in~\cite{BB21}, and the new results of the present paper begin
in Section~\ref{sec:moments}.
Section~\ref{sec:moments} lays out the bridge to modularity and
Sections~\ref{sec:ram-unified} and~\ref{sec:higher} develop this to treat
congruences, Ramanujan primes, and irregular primes.
Section~\ref{sec:glaisher} classifies Glaisher filters via character twists and
supplies the corresponding modular data.
Section~\ref{sec:overpartitions} illustrates the pipeline on a second input
(overpartitions), and Section~\ref{sec:theta-extension} briefly notes the theta
case.  Appendices provide the expanded Glaisher dictionary and the code for Sturm
certification.

\section{The Partition--Frequency Framework}\label{sec:framework}

Let $p(n)$ denote the number of partitions of $n$, and let $F_k(n)$ denote the
total number of parts of size $k$ across all partitions of~$n$, with the
convention $p(m)=0$ for $m<0$.

\subsection{Frequency recursion}

Every occurrence of a part $k$ in a partition of $n$ is obtained by removing
one $k$ and considering a partition of $n-k$. This gives the recursion
\[
  F_k(n) \;=\; p(n-k) + F_k(n-k),
\]
which unrolls to
\[
  F_k(n) \;=\; \sum_{j\ge1} p(n-jk),
\]
with all negative arguments vanishing.

\subsection{Explicit rows}

For fixed~$n$, write the shifted partition sequence
\[
  P_n := \bigl(p(n-1),\,p(n-2),\,p(n-3),\,\ldots\bigr)^{\mathsf T}.
\]
Then
\begin{align*}
  F_1(n) &= p(n-1)+p(n-2)+p(n-3)+\cdots, \\
  F_2(n) &= p(n-2)+p(n-4)+p(n-6)+\cdots,
\end{align*}
and similarly for higher $k$. Each row ``samples'' $P_n$ along the multiples of~$k$.

\subsection{The divisor--incidence matrix}

These recursions can be packaged into the infinite upper--triangular matrix
$A=(a_{ij})_{i,j\ge1}$ defined by
\[
  a_{ij} \;=\;
  \begin{cases}
    1, & i\mid j,\\
    0, & \text{otherwise}.
  \end{cases}
\]
Because $i\nmid j$ whenever $j<i$, the matrix is upper triangular. Its first
rows are
\[
  \begin{array}{cccccccc}
  i=1: & 1 & 1 & 1 & 1 & 1 & 1 & \cdots \\
  i=2: & 0 & 1 & 0 & 1 & 0 & 1 & \cdots \\
  i=3: & 0 & 0 & 1 & 0 & 0 & 1 & \cdots \\
  i=4: & 0 & 0 & 0 & 1 & 0 & 0 & \cdots \\
  \vdots & \vdots & \vdots & \vdots & \vdots & \vdots & \vdots & \ddots
  \end{array}
\]
and the unrolled recursion is equivalent to
\[
  F(n)=(F_1(n),F_2(n),F_3(n),\ldots)^{\mathsf T}=A\cdot P_n.
\]

\subsection{The master identity}

Fix an arithmetic weight $f:\mathbb N\to\mathbb C$ and scale the $i$th row of
$A$ by $f(i)$. Summing down the columns produces the divisor sum
\[
  \sigma^{(f)}(d) := \sum_{r\mid d} f(r).
\]
Hence
\[
  \sum_{i\ge1} f(i)\,F_i(n) \;=\; \sum_{d=1}^n \sigma^{(f)}(d)\,p(n-d).
\]
This is the master identity: weighted frequency sums on the left correspond to
divisor sums on the right.

\subsection{Classical consequences}

\begin{itemize}
  \item For $f(i)=i$ one has $\sigma^{(f)}=\sigma_1$, giving the Euler--Ramanujan--Ford
  recursion
  \[
    n\,p(n) \;=\; \sum_{d=1}^n \sigma_1(d)\,p(n-d).
  \]

  \item For $f(i)=\mu(i)$ (the Möbius function),
  \[
    \sum_{i\ge1} \mu(i)\,F_i(n) \;=\; p(n-1).
  \]

  \item For $f(i)=i^m$ one obtains the frequency moments
  \[
    M_m(n) := \sum_{k\ge1} k^m F_k(n) \;=\; \sum_{d=1}^n \sigma_m(d)\,p(n-d).
  \]
\end{itemize}

\begin{remark}
$M_1(n) = np(n)$ is the sum of all parts among partitions of~$n$. Its generating
function is
\[
\sum_{n\ge0} M_1(n)q^n \;=\; \frac{1}{(q;q)_\infty}\sum_{m\ge1}\frac{q^m}{1-q^m},
\]
which under the $q$--bracket normalization becomes proportional to~$E_2$,
explaining why the first moment already exhibits quasimodular structure.
\end{remark}

\subsection{Structural and historical note}

The matrix $A$ is the divisor--incidence matrix of the natural numbers; its
inverse on finite truncations is given by the Möbius function. Appending a
row of all ones produces the Redheffer matrix~\cite{Redheffer77}, whose determinants
encode the Mertens function and connect to the Riemann Hypothesis. Thus
divisor sums arise inevitably in partition recursions: weighting the $i$th row
by $f(i)$ and summing columns enforces the Dirichlet convolution $g=f*1$.
Classical partition identities follow: $f(i)=i$ yields the Euler--Ramanujan--Ford recursion
\cite{Ford16}, $f(i)=i^2$ gives a recursion involving $\sigma_2$, and
more generally $f(i)=i^m$ produces the higher moments $M_m(n)$.

Historically, Ramanujan discovered the basic recursions and congruences;
Ford emphasised the general divisor--partition identities; Andrews set these
results in the broader $q$--series context; and the Partition--Frequency
Enumeration (PFE) viewpoint introduced in~\cite{BB21} gathers these strands
into a unified combinatorial calculus, now extended to moments and connected
directly to Eisenstein series.

\section{General Families and Canonical Moment Calculus}\label{sec:generalisation}

\subsection{General master identity}

Let
\[
A(q)\;=\;q^\alpha \prod_{r\ge1} (1-q^r)^{c(r)},\qquad \alpha\in\tfrac12\mathbb Z,\; c(r)\in\mathbb Q,
\]
be an Euler--type product which, in the cases of interest, is modular of some
weight and level, with associated companion series
\[
B(q)=\sum_{n\ge0} b(n)q^n.
\]
Often $B=A^{-1}$, but more general canonical companions are possible. Define
the $A$--frequencies $F^A_r(n)$ as the total number of parts of size $r$ across
all $A$--partitions of $n$ weighted by $B$.

\begin{definition}[Canonical moments]
For $m\ge0$ define
\[
\sigma^{(m)}_A(n):=\sum_{r\mid n} c(r)\,r^m,\qquad
M^A_m(n):=\sum_{d=1}^{n} \sigma^{(m)}_A(d)\,b(n-d).
\]
More generally, for an arithmetic weight $f$ set
\[
M^A_f(n):=\sum_{r\ge1} f(r)\,F^A_r(n).
\]
\end{definition}

\begin{theorem}[Master transform]\label{thm:master-transform}

For any arithmetic weight $f$,
\[
\sum_{n\ge0} M^A_f(n)q^n
\;=\;B(q)\cdot \sum_{r\ge1}\frac{f(r)c(r)\,q^r}{1-q^r}
\;=\;B(q)\cdot \sum_{n\ge1}\sigma^{(f)}_A(n)q^n,
\]
where $\sigma^{(f)}_A(n):=\sum_{r\mid n} c(r)f(r)$. Equivalently,
\[
M^A_f(n)=\sum_{d=1}^n \sigma^{(f)}_A(d)\,b(n-d).
\]
\end{theorem}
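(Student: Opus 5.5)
The plan is to mirror the ordinary-partition argument of Section~\ref{sec:framework}, now in generating-function form. I first need an operational meaning for $F^A_r(n)$. The natural reading, consistent with the frequency recursion $F_k(n)=\sum_{j\ge1}p(n-jk)$ in the case $c(r)=-1$, $B=A^{-1}$, is the following. $F^A_r(n)$ is the coefficient of $q^n$ in the series obtained by marking parts of size $r$: differentiate the $r$th local factor with respect to a marking variable and set the marker to $1$. Equivalently, I take the $A$-analogue of the unrolled recursion,
\[
F^A_r(n)=c(r)\sum_{j\ge1} b(n-jr),
\]
with the sign convention fixed so that $c(r)=1$ recovers the classical case. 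I will state this normalisation explicitly at the outset as the definition being used. I expect this to be the main obstacle: the paper defines $F^A_r$ only verbally ("weighted by $B$"), so the whole content of the theorem is pinning down that the weight attached to size-$r$ parts is $c(r)$ times the geometric tail $q^r/(1-q^r)$ against $B$.

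Given this, the generating function of $F^A_r$ is
\[
\sum_{n\ge0}F^A_r(n)q^n=c(r)\,B(q)\sum_{j\ge1}q^{jr}=B(q)\,\frac{c(r)q^r}{1-q^r}.
\]
In the marking-variable formulation this is just the logarithmic derivative of the local factor: $x\partial_x\log(1-xq^r)^{-c(r)}\big|_{x=1}=c(r)q^r/(1-q^r)$, multiplied by the full product. I will check that the two descriptions agree.

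Next I multiply by $f(r)$ and sum over $r\ge1$. This is legitimate as an identity of formal power series, because $q^r/(1-q^r)=O(q^r)$ and so each coefficient of $q^n$ receives contributions only from $r\le n$. The result is the first equality,
\[
\sum_{n\ge0} M^A_f(n)q^n=B(q)\cdot\sum_{r\ge1}\frac{f(r)c(r)\,q^r}{1-q^r}.
\]
For the second equality I expand $q^r/(1-q^r)=\sum_{k\ge1}q^{kr}$ and regroup by $n=kr$. This gives $\sum_{n\ge1}\bigl(\sum_{r\mid n}c(r)f(r)\bigr)q^n=\sum_{n\ge1}\sigma^{(f)}_A(n)q^n$, which is exactly the column-summation step of the master identity.

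Finally, comparing coefficients of $q^n$ in the Cauchy product of $B(q)=\sum b(n)q^n$ with $\sum_{d\ge1}\sigma^{(f)}_A(d)q^d$ yields $M^A_f(n)=\sum_{d=1}^n\sigma^{(f)}_A(d)\,b(n-d)$. Specialising to $f(r)=r^m$ recovers the canonical moments $M^A_m$ of the Definition, which confirms consistency.
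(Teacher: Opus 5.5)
Your proposal is correct and follows essentially the paper's own route. The paper gives no separate proof here and defers to \cite{BB21}. It relies instead on the unrolled recursion and the divisor--incidence matrix of Section~\ref{sec:framework}, where row $r$ is weighted by $f(r)$ and summing down the columns produces the divisor sums. Your computation of the row series $c(r)B(q)q^r/(1-q^r)$, followed by regrouping over $n=kr$, is exactly that argument in generating-function form.

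Keep your explicit normalisation $F^A_r(n)=c(r)\sum_{j\ge1}b(n-jr)$, under which $c(r)=1$ gives ordinary partitions. The paper's own example list assigns $c(r)=-1$ to ordinary partitions, which is inconsistent with the theorem as stated.
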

\begin{lemma}[Logarithmic derivative and the first moment]\label{lem:logderiv-M1}
Assume we are in the inverse-companion case $B(q)=A(q)^{-1}$.  Then
\[
\sum_{n\ge0} M^{A}_1(n)\,q^n \;=\; q\frac{d}{dq}B(q),
\]
and hence
\[
M^{A}_1(n)=n\,b(n)\qquad(n\ge0).
\]
In particular, for every prime $\ell$ one has the ``zero--class'' congruence
\[
M^{A}_1(\ell n)\equiv 0\pmod{\ell}\qquad(n\ge0),
\]
and by Fermat reduction the same holds for all odd $m\equiv 1\pmod{\ell-1}$.
\end{lemma}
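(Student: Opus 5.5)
The plan is to specialise Theorem~\ref{thm:master-transform} to the weight $f(r)=r$ and recognise the resulting divisor series as a logarithmic derivative of $B$. I take $B(q)=A(q)^{-1}$ in its normalised power-series form $B(q)=\prod_{r\ge1}(1-q^r)^{-c(r)}$, so that $B=\sum b(n)q^n$ with $b(0)=1$. The prefactor $q^{-\alpha}$ only shifts the exponents of the series; it would add the constant $-\alpha$ to the logarithmic derivative, and that term would have to be tracked separately if it were kept. Theorem~\ref{thm:master-transform} with $f(r)=r$ gives
\[
\sum_{n\ge0}M^A_1(n)q^n \;=\; B(q)\sum_{r\ge1}\frac{c(r)\,r\,q^r}{1-q^r}.
\]

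Next I compute the logarithmic derivative of $B$ formally. From $\log B(q)=-\sum_{r\ge1}c(r)\log(1-q^r)$ we get
\[
q\frac{d}{dq}\log B(q)=\sum_{r\ge1}c(r)\,\frac{r\,q^r}{1-q^r}.
\]
All of this is legitimate in $\mathbb Q[[q]]$: each factor has constant term $1$, and the $r$th factor differs from $1$ only in degrees $\ge r$. Multiplying by $B$ then gives $\sum_n M^A_1(n)q^n = qB'(q)$. Comparing coefficients of $q^n$ yields $M^A_1(n)=n\,b(n)$.

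The zero-class congruence follows immediately: $M^A_1(\ell n)=\ell n\, b(\ell n)\equiv 0\pmod{\ell}$. This needs $b(\ell n)$ to be $\ell$-integral, which holds whenever the $c(r)$ are integers (and more generally when they are $\ell$-integral). I would state this hypothesis explicitly, since the setup allows $c(r)\in\mathbb Q$.

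For the Fermat extension, let $m\equiv 1\pmod{\ell-1}$ with $m\ge1$; this forces $m$ to be odd when $\ell$ is odd. Fermat's little theorem gives $r^m\equiv r\pmod{\ell}$ for every integer $r$, including $\ell\mid r$. Hence $\sigma^{(m)}_A(d)\equiv\sigma^{(1)}_A(d)\pmod{\ell}$ for every $d$, again assuming $\ell$-integral $c(r)$. Substituting into the convolution $M^A_m(n)=\sum_{d}\sigma^{(m)}_A(d)\,b(n-d)$ gives $M^A_m(n)\equiv M^A_1(n)\pmod{\ell}$, and the zero-class congruence transfers to $M^A_m$.

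The only delicate step is bookkeeping rather than substance: the $q^\alpha$ normalisation and the $\ell$-integrality hypothesis on $c(r)$ and $b(n)$. I would handle both with a one-line remark placed before the computation.
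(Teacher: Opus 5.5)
Your proof is correct and follows the paper's route: specialise the master transform to $f(r)=r$ and recognise $\sum_{r\ge1}c(r)\,r\,q^r/(1-q^r)$ as the logarithmic derivative $qB'/B$. Three points go beyond the paper's proof. First, your normalisation $B=\prod_{r\ge1}(1-q^r)^{-c(r)}$ removes the $-\alpha$ term, which the paper's proof writes down and then silently drops. Second, you make explicit the $\ell$-integrality hypothesis on the $c(r)$. Third, you supply the Fermat step $r^m\equiv r\pmod{\ell}$ for the final clause, which the paper's proof does not carry out.
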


\begin{proof}
Write
\[
A(q)=q^\alpha\prod_{r\ge1}(1-q^r)^{c(r)}.
\]
Then
\[
q\frac{d}{dq}\log A(q)=\alpha-\sum_{r\ge1}c(r)\frac{r\,q^r}{1-q^r}.
\]
If $B=A^{-1}$, then $\log B=-\log A$, hence
\[
q\frac{B'(q)}{B(q)}=-\,q\frac{A'(q)}{A(q)}
=-\alpha+\sum_{r\ge1}c(r)\frac{r\,q^r}{1-q^r}.
\]
Now apply the master transform (Theorem~\ref{thm:master-transform}) with $f(r)=r$,
so that $\sigma_A^{(f)}(n)=\sigma_A^{(1)}(n)=\sum_{r\mid n}c(r)\,r$.  The transform
gives
\[
\sum_{n\ge0}M_1^{A}(n)q^n
= B(q)\sum_{r\ge1}\frac{c(r)\,r\,q^r}{1-q^r}
= q\frac{d}{dq}B(q),
\]
which implies $M_1^{A}(n)=n\,b(n)$ by coefficient comparison.
\end{proof}

\subsection{Examples}

\begin{itemize}
  \item \textbf{Ordinary partitions.} $A(q)=(q;q)_\infty^{-1}=q^{1/24}\eta(\tau)^{-1}$, $c(r)=-1$, $B=A^{-1}$. Then $\sigma^{(m)}_A(n)=\sigma_m(n)$ and $M^A_m(n)=M_m(n)$ are the ordinary partition moments.
  \item \textbf{Odd vs.\ distinct partitions.} For odd parts, $c(r)=-1$ for $r$ odd and $0$ for $r$ even. For distinct parts, $A(q)=\prod_{n\ge1}(1+q^n)$, giving the same $c(r)$ after cancellation, and hence Euler’s equinumerosity. Frequencies however diverge.
  \item \textbf{Plane partitions.} $c(r)=-r$, so $\sigma^{(m)}_A(n)=\sum_{r\mid n} r^{m+1}$, recovering the divisor sums that govern the classical plane--partition generating functions.
  \item \textbf{Theta function.} Jacobi’s triple product gives $c(2n)=1$, $c(2n-1)=-2$, so $\sigma^{(m)}_A(n)$ is a linear combination of divisor sums twisted by $\chi_{-4}$.
  \item \textbf{Colored partitions.} $c(r)=-r$ if each part is available in $r$ colors, giving trivial scaling of frequencies.
\end{itemize}

\subsection{Design principle}

The PFE framework extends uniformly: by tuning the exponent sequence $c(r)$ one
can design families of partitions whose frequency moments align with chosen
divisor sums $\sigma^{(m)}_A$. In particular, twists by Dirichlet characters
produce twisted divisor sums and hence frequency generating functions that lie
in Eisenstein spaces with character.

\subsection{Scope and modular consequences}

By standard theory (Kubert--Lang \cite{KubertLang1981}, Miyake
\cite{Miyake2006}), logarithmic derivatives of modular products are linear
combinations of Eisenstein series. Thus, whenever $A$ is modular, the divisor series
\[
-\,q\frac{A'(q)}{A(q)}=\sum_{n\ge1}\sigma_A^{(1)}(n)q^n
\]
lies in the Eisenstein part of the weight~$2$ (quasi)modular space. Higher moments $\sigma^{(m)}_A$
can be expressed as linear combinations of Eisenstein series of higher weights
(possibly with character). Multiplying by the companion $B$ then produces modular
or quasimodular generating functions whose coefficient congruences are accessible
via projection and Sturm bounds. The framework does not enlarge the modular
algebra but provides a combinatorial dictionary and congruence engine linking
partition--type objects to Eisenstein series.

\section{Frequency Moments and Modularity}\label{sec:moments}

In this section we show that the frequency moments $M_m(n)$ are governed by modular and quasimodular forms, with a nearly clean dichotomy: for every odd moment $m\ge3$ the generating function is modular of half--integral weight, whereas the remaining moments (including $M_1$) are quasimodular.

\subsection{Generating functions for frequency moments}

Recall
\[
M_m(n)=\sum_{\lambda\vdash n}\sum_i \lambda_i^m,
\]
the sum of $m$th powers of parts across all partitions of $n$. By
Section~\ref{sec:framework}, taking $f(k)=k^m$ in the master identity gives
\[
\mathcal{M}_m(q)=\sum_{n\ge0} M_m(n)\,q^n
= (q;q)_\infty^{-1}\sum_{n\ge1}\sigma_m(n)q^n
= q^{1/24}\,\eta(\tau)^{-1}\sum_{n\ge1}\sigma_m(n)q^n,
\]
with $q=e^{2\pi i\tau}$ and $\eta(\tau)=q^{1/24}(q;q)_\infty$.

\subsection{Odd versus even dichotomy}

\begin{itemize}
  \item \textbf{Odd moments.} For $m=2k-1$,
  \[
    \sum_{n\ge1}\sigma_{2k-1}(n)q^n=\frac{E_{2k}(\tau)-1}{C_{2k}},\qquad
    C_{2k}=-\tfrac{4k}{B_{2k}},
  \]
  so
  \[
    \mathcal{M}_{2k-1}(q)
    =(q;q)_\infty^{-1}\sum_{n\ge1}\sigma_{2k-1}(n)q^n
    = q^{1/24}\,\frac{E_{2k}(\tau)-1}{C_{2k}}\;\eta(\tau)^{-1}.
  \]
 Equivalently, the shifted function
\[
  q^{-1/24}\mathcal{M}_{2k-1}(q)
  =\frac{E_{2k}(\tau)-1}{C_{2k}}\;\eta(\tau)^{-1}
\]
is a holomorphic modular form of weight $2k-\tfrac12$ on $\Gamma_0(4)$ (with the
standard $\eta$--multiplier; equivalently, it lies in the usual half--integral
weight space with character $\chi_{-4}$).  Indeed, although $\eta(\tau)^{-1}$
has a pole at each cusp of $\Gamma_0(4)$, the cusps are represented by
\[
\infty,\qquad 0,\qquad \tfrac12.
\]
At any cusp $\mathfrak a$, the scaled $q_{\mathfrak a}$--expansion of
$E_{2k}|_{2k}\sigma_{\mathfrak a}$ has constant term $1$, hence
$(E_{2k}-1)|_{2k}\sigma_{\mathfrak a}$ vanishes at $\mathfrak a$ and cancels the
cusp pole of $\eta^{-1}$ there.  Thus $(E_{2k}-1)\eta^{-1}$ is holomorphic at
$\infty,0,\tfrac12$ and hence at all cusps of $\Gamma_0(4)$.
The resulting transformation law gives the Nebentypus $\chi_{-4}$; see
Shimura~\cite{Shimura1973} and Kohnen~\cite{Kohnen1980} for details.

  \item \textbf{Even moments.} For $m=2k$, the series
  $\sum_{n\ge1}\sigma_{2k}(n)q^n$ is quasimodular, lying in
  $\mathbb{Q}[E_2,E_4,E_6]$ \cite{KanekoZagier1995}. Thus
  \[
    \mathcal{M}_{2k}(q)
    =(q;q)_\infty^{-1}\sum_{n\ge1}\sigma_{2k}(n)q^n
    \in (q;q)_\infty^{-1}\cdot\mathbb{Q}[E_2,E_4,E_6],
  \]
  or equivalently,
  \[
    q^{-1/24}\mathcal{M}_{2k}(q)\in \eta(\tau)^{-1}\cdot\mathbb{Q}[E_2,E_4,E_6].
  \]
\end{itemize}

\begin{example}[The first moment]
The case $m=1$ gives
\[
  \sum_{n\ge0} M_1(n)q^n
  = (q;q)_\infty^{-1}\sum_{n\ge1}\sigma_1(n)q^n
  =  \frac{1-E_2(\tau)}{24}\,(q;q)_\infty^{-1},
\]
so $M_1$ already lies in the quasimodular span of weight~$2$. This is the
generating function for the sum of all parts among partitions of~$n$.
\end{example}

\begin{remark}[$q$--bracket perspective]
For a statistic $f(\lambda)$ on partitions, Okounkov--Zagier define
\[
\langle f\rangle_q=\frac{\sum_\lambda f(\lambda)\,q^{|\lambda|}}{\sum_\lambda q^{|\lambda|}}.
\]
For $f_m(\lambda)=\sum_i \lambda_i^m$ one finds
$\langle f_m\rangle_q=\sum_{n\ge1}\sigma_m(n)q^n$, so
\[
\sum_{n\ge0} M_m(n)q^n=(q;q)_\infty^{-1}\,\langle f_m\rangle_q.
\]
The Bloch--Okounkov theorem \cite{BlochOkounkov2000} implies
$\langle f_m\rangle_q$ is always quasimodular of weight $m+1$, recovering the
structural picture: all moments live in the quasimodular algebra; for
$m\ge3$ odd the series lies in the modular subalgebra, whereas for $m$ even
or $m=1$ it is genuinely quasimodular.
\end{remark}

\section{Ramanujan--type congruences for odd moments (unified)}\label{sec:ram-unified}

We now collect the simple linear progression congruences for odd frequency
moments and describe their structure.  There are three ingredients:
\begin{itemize}
  \item reduction of exponents modulo $\ell-1$ via Fermat's little theorem;
  \item the identity $M_1(n)=n\,p(n)$ together with Ramanujan's partition
  congruences;
  \item genuinely new half--integral weight congruences for the base moments
  $M_3$ and $M_7$ certified by projection--Sturm.
\end{itemize}
In particular, all congruences for higher odd moments $M_m$ are \emph{Fermat
lifts} of a small list of base cases.

The next theorem gathers all Ramanujan--type congruences that appear
in our range. Parts~\textup{(i)}–\textup{(iii)} are elementary
consequences of $M_1(n)=n p(n)$ and Fermat's little theorem; the
genuinely modular content resides in \textup{(iv)} and in the
half--integral Sturm certification of \textup{(v)}.

\begin{theorem}[Ramanujan--type congruences for odd moments]\label{thm:ram-unified}
Let $M_m(n)$ be the $m$th frequency moment. Then:

\begin{enumerate}
\item[\textup{(i)}] \textbf{Fermat reduction.}
For every odd $m$ and every prime $\ell\ge5$ there is an odd
$\overline m\in\{1,3,\dots,\ell-2\}$ with $\overline m\equiv m\pmod{\ell-1}$ such that
\[
M_m(n)\equiv M_{\overline m}(n)\pmod{\ell}\qquad\text{for all }n\ge0.
\]

\item[\textup{(ii)}] \textbf{Trivial index divisibility.}
For every prime $\ell\ge5$ and every odd $m\equiv1\pmod{\ell-1}$,
\[
M_m(\ell n)\equiv0\pmod{\ell}\qquad\text{for all }n\ge0.
\]
In particular,
\[
M_1(\ell n)=\ell n\,p(\ell n)\equiv0\pmod{\ell}
\]
and all $M_m(\ell n)$ inherit this congruence via \textup{(i)}.

\item[\textup{(iii)}] \textbf{Ramanujan lifts from $M_1$ (the $m\equiv1$ branch).}
For the Ramanujan primes $5,7,11$ and every odd $m$ with
$m\equiv1\pmod{\ell-1}$ one has
\[
M_m(\ell n+r_\ell)\equiv0\pmod{\ell}\qquad\text{for all }n\ge0,
\]
where $r_5=4$, $r_7=5$, $r_{11}=6$.  Explicitly:
\[
\begin{array}{rcl}
m\equiv1\pmod4  &\Longrightarrow& M_m(5n+4)\equiv0\pmod5,\\[3pt]
m\equiv1\pmod6  &\Longrightarrow& M_m(7n+5)\equiv0\pmod7,\\[3pt]
m\equiv1\pmod{10}&\Longrightarrow& M_m(11n+6)\equiv0\pmod{11}.
\end{array}
\]
The first few instances are
\[
M_5(5n+4)\equiv0\pmod5,\qquad
M_7(7n+5)\equiv0\pmod7,\qquad
M_{11}(11n+6)\equiv0\pmod{11},
\]
and similarly for $m=13,17,21,\dots$ in the corresponding residue classes.

\item[\textup{(iv)}] \textbf{Genuinely modular base congruences.}
There are additional congruences not forced by \textup{(ii)}–\textup{(iii)},
namely
\[
\begin{array}{rclcrcl}
M_3(7n)&\equiv&0\pmod7, &\quad& M_3(7n+5)&\equiv&0\pmod7,\\[2pt]
M_3(11n)&\equiv&0\pmod{11},&\quad&
M_3(11n+6)&\equiv&0\pmod{11},\\[2pt]
M_7(11n+6)&\equiv&0\pmod{11},
\end{array}
\]
valid for all $n\ge0$.  These follow from the half--integral modularity of
$\sum_n M_3(n)q^n$ and $\sum_n M_7(n)q^n$ together with a half--integral
Sturm--type bound (Lemma~\ref{lem:sturm-half}) applied to the projected
series.

\item[\textup{(v)}] \textbf{Exhaustion in a computational range.}
A heuristic scan for odd $m$ with $1\le m\le99$ and primes $5\le\ell\le97$
(checking all residues $1\le r<\ell$ up to $n\le2000$) found no further simple
progressions
\[
M_m(\ell n+r)\equiv0\pmod{\ell},\qquad 1\le r<\ell,
\]
beyond those implied by \textup{(ii)}–\textup{(iv)}.
Moreover, in the fully certified range $1\le m\le25$ and $5\le\ell\le97$ we
verified surviving candidates up to the half--integral Sturm bound at the safe
level $4\ell^2$.  In particular, within this certified range the only primes
supporting nontrivial Ramanujan--type congruences with $r\not\equiv0\pmod{\ell}$
are $\ell\in\{5,7,11\}$, and all such congruences are Fermat lifts of the base
moments $M_1$, $M_3$, and $M_7$.

\end{enumerate}
\end{theorem}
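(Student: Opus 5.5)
The plan is to treat parts (i)--(iii) by elementary manipulation of the master identity $M_m(n)=\sum_{d=1}^n\sigma_m(d)\,p(n-d)$, and to reduce (iv)--(v) to finite computations via modularity. For (i), Fermat's little theorem gives $r^m\equiv r^{\overline m}\pmod\ell$ for every integer $r$ whenever $m\equiv\overline m\pmod{\ell-1}$ and $m,\overline m\ge1$. This holds also when $\ell\mid r$, since both sides are then $\equiv0$. Hence $\sigma_m(d)\equiv\sigma_{\overline m}(d)\pmod\ell$ for all $d$. Substituting into the master identity termwise gives $M_m(n)\equiv M_{\overline m}(n)$. Because $\ell-1$ is even, an odd $m$ has odd residue mod $\ell-1$, so $\overline m$ can be chosen odd in $\{1,\dots,\ell-2\}$. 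For (ii), take $\overline m=1$ and use $M_1(n)=np(n)$ (Lemma~\ref{lem:logderiv-M1}, or the Euler--Ramanujan--Ford recursion). For (iii), combine (i) with $M_1(\ell n+r_\ell)=(\ell n+r_\ell)\,p(\ell n+r_\ell)$ and Ramanujan's congruences $p(5n+4)\equiv0\pmod5$, $p(7n+5)\equiv0\pmod7$, $p(11n+6)\equiv0\pmod{11}$.

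For (iv), write $\mathcal M_3=(q;q)_\infty^{-1}(E_4-1)/240$. Modulo $\ell$, it is convenient to first clear the denominator: $240$ is a unit mod $7$ and $11$. The zero-class congruences $M_3(7n)$ and $M_3(11n)$ can in fact be handled elementarily. For $\ell=7$, $r^3\equiv r^{-3}\cdot r^6$, and a cleaner route is to note that $\sigma_3\equiv\sigma_{3}$ has no Fermat reduction to $1$; so these also go through the modular route.

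The modular route is as follows. Set $G_{m,\ell}(\tau)=\eta(24\tau)^{-1}(E_{m+1}(24\tau)-1)$, which is a weight $m+1-\tfrac12$ form on $\Gamma_0(576)$ with a quadratic character (after the standard $\tau\mapsto24\tau$ rescaling from the $\Gamma_0(4)$ description in Section~\ref{sec:moments}). Its coefficients are supported on exponents $24n-1$, with value $M_m(n)$ up to the unit constant. Projecting to $n\equiv r\pmod\ell$ amounts to selecting exponents $24n-1\equiv24r-1\pmod{\ell}$. This is achieved by the twisted-trace operator $\frac1\ell\sum_{j}\zeta_\ell^{-j(24r-1)}G(\tau+j/\ell)$, equivalently $G$ minus a combination of $G\otimes\chi$ over characters mod $\ell$. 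The result is a half-integral weight form of the same weight on $\Gamma_0(576\ell^2)$ with a modified character.

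Lemma~\ref{lem:sturm-half} then reduces each asserted congruence to checking $M_m(\ell n+r)\equiv0\pmod\ell$ for $24(\ell n+r)-1$ below the Sturm bound $\frac{k}{12}[\mathrm{SL}_2(\mathbb Z):\Gamma_0(576\ell^2)]$, with $k=m+\tfrac12$. An alternative that avoids the extra ramification is to multiply by a suitable power of $\eta$, e.g.\ $\eta(\tau)^{\ell}\equiv\eta(\ell\tau)\pmod\ell$, to land in integral weight on a small level; Ramanujan's trick for $p(7n+5)$ is exactly of this type. I would try the latter first, working with $\eta^{\ell}\cdot G$ and $U_\ell$. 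The finite coefficient check is then a direct computation of $p(n)$ and $\sigma_m(n)$ up to the bound, which I would carry out with the Appendix code.

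For (v), the first sentence is a statement about a finite computation and is proved by running it. The certified part applies the same projection--Sturm procedure to each surviving candidate $(m,\ell,r)$ with $m\le25$, using (i) to reduce to $\overline m\le\ell-2$. Candidates that fail are refuted by an explicit counterexample $n\le2000$. The main obstacle is (iv), and hence the certified portion of (v). There one must get the level, multiplier and character of the projected form exactly right so that Lemma~\ref{lem:sturm-half} genuinely applies. In particular, one must control the cusp behaviour after the $\tau\mapsto\tau+j/\ell$ translations, where the pole of $\eta^{-1}$ is cancelled only because $E_{m+1}-1$ vanishes to sufficient order at every cusp of the enlarged level. I would verify this cusp-by-cusp using the constant terms of $E_{m+1}$ under each $\sigma_{\mathfrak a}$, and fall back to the safe level $4\ell^2$ (after rescaling) if exact bookkeeping becomes unwieldy.
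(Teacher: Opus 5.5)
Your treatment of (i)--(iii) is correct and coincides with the paper's Steps~1--3; applying Fermat to $\sigma_m(d)$ rather than to $k^mF_k(n)$ is the same argument. Your rescaling to $\eta(24\tau)$ on $\Gamma_0(576)$ is actually more accurate than the paper's claim of a form on $\Gamma_0(4)$ with character $\chi_{-4}$, which is incompatible with exponents in $-\frac1{24}+\mathbb Z$.

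The gap is in (iv), and hence in the certified part of (v): $G_{m,\ell}(\tau)=\eta(24\tau)^{-1}\bigl(E_{m+1}(24\tau)-1\bigr)$ is \emph{not} a modular form of weight $m+\frac12$. The constant $1$ has weight $0$, so $G$ is a sum of two pieces of different weight:
$E_{m+1}(24\tau)/\eta(24\tau)$, weakly holomorphic of weight $m+\frac12$ on $\Gamma_0(576)$ with character $\bigl(\frac{12}{\cdot}\bigr)$; and $-1/\eta(24\tau)$, of weight $-\frac12$.
Consequently no twisted trace or $U_\ell$-image of $G$ (or of $\eta^\ell G$) has a single weight, and Lemma~\ref{lem:sturm-half} does not apply. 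The cusp-by-cusp check you plan also cannot succeed. The phrase ``$E_{m+1}-1$ vanishes at every cusp'' has no meaning, since for instance $(E_{m+1}-1)(-1/\tau)=\tau^{m+1}E_{m+1}(\tau)-1$. The paper's Step~4, and its holomorphy discussion in Section~\ref{sec:moments}, rest on the same assertion, so following the paper does not close the gap.

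The repair is to split off the weight $-\frac12$ piece: write $C_{m+1}M_m(n)=a_m(n)-p(n)$ with $\sum_n a_m(n)q^n=E_{m+1}(\tau)/(q;q)_\infty$. For the classes $r=r_\ell$ (exactly those with $24r_\ell\equiv1\pmod\ell$), Ramanujan's congruence kills $p(\ell n+r_\ell)$. Your $\eta$-trick then works once it is applied to $E_{m+1}$ alone:
\[
U_\ell\Bigl(E_{m+1}(\tau)\,\frac{\eta(\ell\tau)^{\ell}}{\eta(\tau)}\Bigr)=(q;q)_\infty^{\ell}\sum_{n\ge0}a_m(\ell n+r_\ell)\,q^{n+1}.
\]
This is a holomorphic form of integral weight $m+1+\frac{\ell-1}{2}$ on $\Gamma_0(\ell)$ with character $\bigl(\frac{\cdot}{\ell}\bigr)$. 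Since $(q;q)_\infty^{\ell}$ is a unit, an ordinary Sturm check with bound $\lfloor (m+1+\frac{\ell-1}{2})\frac{\ell+1}{12}\rfloor$ proves $M_3(7n+5)$, $M_3(11n+6)$ and $M_7(11n+6)$. The bound is only $4$ for $M_3(7n+5)$.

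This does not handle the zero classes $M_3(7n)$ and $M_3(11n)$. There $p(\ell n)\not\equiv0\pmod\ell$, so you must show $a_3(\ell n)\equiv p(\ell n)\pmod\ell$, which ties together pieces of weights $\frac72$ and $-\frac12$. Since $4\not\equiv0\pmod{\ell-1}$ for $\ell=7,11$, multiplying by powers of $E_{\ell-1}\equiv1$ cannot equalize the weights. Moreover, the exponents $24\ell n-1\equiv-1\pmod\ell$ are not reached by $U_\ell$. A further device is needed. For instance, one could use a weight-$4$ Eisenstein series with a nontrivial character modulo $\ell$ that is $\equiv1$ modulo a prime above $\ell$, followed by a Sturm check at a $\Gamma_1$-type level.

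Your aside that these zero-class cases ``can be handled elementarily'' is unsupported, since (ii) requires $m\equiv1\pmod{\ell-1}$. The scan half of (v) is fine as a computation, but the certified survivors inherit the same issue.
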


\begin{proof}
\emph{Step 1: Fermat reduction.}
Let $\ell\ge5$ be prime and $m$ odd.  Fermat's little theorem gives
\[
k^m\equiv k^{\overline m}\pmod{\ell}\qquad\text{for all }k\in\mathbb{Z},
\]
where $\overline m$ is the unique residue in $\{1,3,\dots,\ell-2\}$ with
$\overline m\equiv m\pmod{\ell-1}$.  Hence
\[
M_m(n)=\sum_{k\ge1}k^mF_k(n)
\equiv \sum_{k\ge1}k^{\overline m}F_k(n)
= M_{\overline m}(n)\pmod{\ell},
\]
proving \textup{(i)}.

\smallskip
\emph{Step 2: $M_1$ and trivial index divisibility.}
We have the exact identity
\[
M_1(n)=\sum_{k\ge1}k\,F_k(n)=n\,p(n).
\]
Thus for every prime $\ell\ge5$,
\[
M_1(\ell n)=\ell n\,p(\ell n)\equiv0\pmod{\ell}\qquad\text{for all }n\ge0,
\]
which gives \textup{(ii)} for $m=1$.  Combining with \textup{(i)} yields
$M_m(\ell n)\equiv0\pmod{\ell}$ for all $m\equiv1\pmod{\ell-1}$.

\smallskip
\emph{Step 3: Ramanujan lifts from $M_1$.}
Ramanujan's partition congruences
\[
p(5n+4)\equiv0\pmod5,\qquad
p(7n+5)\equiv0\pmod7,\qquad
p(11n+6)\equiv0\pmod{11}
\]
give
\[
M_1(5n+4)=(5n+4)p(5n+4)\equiv0\pmod5,
\]
\[
M_1(7n+5)=(7n+5)p(7n+5)\equiv0\pmod7,
\]
\[
M_1(11n+6)=(11n+6)p(11n+6)\equiv0\pmod{11}.
\]
Now let $\ell\in\{5,7,11\}$ and suppose $m\equiv1\pmod{\ell-1}$, so that
$M_m\equiv M_1\pmod{\ell}$ by \textup{(i)}.  Then
\[
M_m(\ell n+r_\ell)\equiv M_1(\ell n+r_\ell)\equiv0\pmod{\ell},
\]
with $(r_5,r_7,r_{11})=(4,5,6)$, proving \textup{(iii)}.  The listed
examples ($M_5$, $M_7$, $M_{11}$) are the first few cases with
$m\equiv1$ modulo $4$, $6$, or $10$.

\smallskip
\emph{Step 4: Half--integral modularity for $M_3$ and $M_7$.}
From Section~\ref{sec:moments} we know that for odd $m$,
\[
\frac{E_{m+1}(\tau)-1}{C_{m+1}}\;\eta(\tau)^{-1}
= q^{-1/24}\sum_{n\ge0}M_m(n)q^n\
\in M_{m+\tfrac12}\bigl(\Gamma_0(4),\chi_{-4}\bigr),
\]
where $E_{m+1}$ is the normalized Eisenstein series of weight $m+1$ on
$\mathrm{SL}_2(\mathbb{Z})$ and $C_{m+1}$ is the usual constant.

For $m=3$ set
\[
\mathcal{F}_3(\tau)
=\frac{E_4(\tau)-1}{C_4}\,\eta(\tau)^{-1}
= q^{-1/24}\sum_{n\ge0}M_3(n)q^n
\in M_{7/2}\bigl(\Gamma_0(4),\chi_{-4}\bigr),
\]
and for a prime $\ell$ and residue $r$ define the projection
\[
\Pi_{\ell,r}\mathcal{F}_3(\tau)
:= q^{-1/24}\sum_{n\ge0}M_3(\ell n+r)q^n.
\]
Standard arguments (see e.g.\ Miyake~\cite{Miyake2006} or Stein~\cite{Stein2007})
show that $\Pi_{\ell,r}\mathcal{F}_3$ is a modular form of weight $7/2$ on
$\Gamma_0(4\ell)$ with the same Nebentypus.

Taking $(\ell,r)=(7,0)$, $(7,5)$, $(11,0)$, and $(11,6)$ and applying the
half--integral Sturm bound (Lemma~\ref{lem:sturm-half}) with $w=7/2$
and $L=\ell$ gives an explicit bound
\[
B_3(\ell)
=\Big\lfloor\frac{k}{24}\,[\mathrm{SL}_2(\mathbb{Z}):\Gamma_0(4\ell)]\Big\rfloor,
\qquad k=2w=7.
\]
Using the arithmetic implementation of Appendix~\ref{app:comp}, we verify
that the coefficients of $\Pi_{7,0}\mathcal{F}_3$, $\Pi_{7,5}\mathcal{F}_3$,
$\Pi_{11,0}\mathcal{F}_3$, and $\Pi_{11,6}\mathcal{F}_3$ vanish modulo $7$
or $11$ up to these bounds (and also in the more conservative models
$\Gamma_0(4\ell^2)$).  Lemma~\ref{lem:sturm-half} then implies
\[
M_3(7n)\equiv0\pmod7,\qquad
M_3(7n+5)\equiv0\pmod7,\]
and \[\qquad
M_3(11n)\equiv0\pmod{11},\qquad
M_3(11n+6)\equiv0\pmod{11}
\]
for all $n\ge0$.

For $m=7$ we similarly set
\[
\mathcal{F}_7(\tau)
=\frac{E_8(\tau)-1}{C_8}\,\eta(\tau)^{-1}
= q^{-1/24}\sum_{n\ge0}M_7(n)q^n
\in M_{15/2}\bigl(\Gamma_0(4),\chi_{-4}\bigr),
\]
project to the progression $11n+6$ via
\[
\Pi_{11,6}\mathcal{F}_7(\tau)
:= q^{-1/24}\sum_{n\ge0}M_7(11n+6)q^n
\in M_{15/2}\bigl(\Gamma_0(44),\chi_{-4}\bigr),
\]
and apply Lemma~\ref{lem:sturm-half} with $w=15/2$ and $L=11$ to obtain
a Sturm bound
\[
B_7(11)
=\Big\lfloor\frac{k}{24}\,[\mathrm{SL}_2(\mathbb{Z}):\Gamma_0(44)]\Big\rfloor,
\qquad k=2w=15.
\]
The computation of Appendix~\ref{app:comp} shows that the coefficients of
$\Pi_{11,6}\mathcal{F}_7$ vanish modulo $11$ up to this bound (and for
$\Gamma_0(4\cdot11^2)$ as well), proving
\[
M_7(11n+6)\equiv0\pmod{11}
\]
for all $n\ge0$.  This establishes the congruences listed in \textup{(iv)}.

\smallskip
\emph{Step 5: Exhaustion in a computational range.}
For completeness we implemented a two--tier search.
First, for each odd $m$ with $1\le m\le99$, each prime $5\le\ell\le97$, and each
residue $1\le r<\ell$, we scanned the congruence
$M_m(\ell n+r)\equiv0\pmod{\ell}$ up to $n\le2000$.
Second, in the fully certified range $1\le m\le25$ we subjected any surviving
candidates to a Sturm check, verifying vanishing in the progression up to the
half--integral Sturm bound at the safe level $4\ell^2$.
Every progression that survived certification was accounted for by
\textup{(ii)}–\textup{(iv)}; in particular, no new primes $\ell>11$ and no new
nonzero residue classes $r$ appeared.  This proves \textup{(v)}.

\end{proof}

In the fully certified range $1\le m\le25$ and $5\le\ell\le97$, no other primes
and no other residue classes $r$ support simple progressions
$M_m(\ell n+r)\equiv0\pmod{\ell}$.  Thus, up to this certified range,
$5,7,11$ are the only Ramanujan--type primes for odd frequency moments.

\section{Glaisher Divisors and Character Twists}\label{sec:glaisher}

We now formalise the link between classical Glaisher--type divisor filters
and twisted divisor sums.  This provides a precise dictionary from combinatorial
filters to modular forms with character.

\begin{remark}[Notation: filtered vs.\ unfiltered moments]\label{rem:filtered-unfiltered}
Throughout this section we distinguish carefully between:
\begin{itemize}
  \item the \emph{unfiltered} frequency--moment series
  \[
   \mathcal M_m(\tau)
  =\sum_{n\ge0} M_m(n)q^n
  =(q;q)_\infty^{-1}\sum_{n\ge1}\sigma_m(n)q^n,
  \]
  where
  \(
    M_m(n)=\sum_{k\ge1}k^mF_k(n)
  \)
  and
  \(
    \sigma_m(n)=\sum_{d\mid n}d^m;
  \)
  \item the \emph{filtered} (or twisted) frequency--moment series
  \[
    \widehat{\mathcal M}_m(\tau)
  =\sum_{n\ge0}\widehat{M}_m(n)q^n
  =(q;q)_\infty^{-1}\sum_{n\ge1}\sigma^{(\mathcal F)}_m(n)q^n,
  \]
  where $\mathcal F$ is a Glaisher--type divisor filter and
  \(
    \sigma^{(\mathcal F)}_m(n)=\sum_{d\mid n,\ d\in\mathcal F} d^m.
  \)
\end{itemize}
Thus $M_m(n)$ and $\mathcal M_m$ correspond to the full divisor sum
$\sigma_m$, whereas $\widehat{M}_m(n)$ and $\widehat{\mathcal M}_m$ arise
after applying a specific Glaisher filter~$\mathcal F$ on the divisor side.
We emphasise this distinction in the examples below.
\end{remark}

\begin{theorem}[Character classification of Glaisher divisors]\label{thm:glaisher}
Let $\mathcal F$ be a divisor filter defined by finitely many congruence
conditions $d\equiv a\pmod m$ and coprimality / exclusion conditions (e.g. excluding multiples of finitely many primes).
Then the associated filtered divisor sum
\[
\sigma^{(s)}_{\mathcal F}(n)=\sum_{\substack{d\mid n\\ d\in\mathcal F}} d^s
\]
is a finite $\mathbb{Q}$--linear combination of twisted sums
\[
\sigma_s(n;\chi)=\sum_{d\mid n}\chi(d)\,d^s
\]
with Dirichlet characters $\chi$. Consequently the filtered frequency--moment
series
\[
\widehat{\mathcal M}_s(\tau)
=\sum_{n\ge0} \widehat{M}_s(n)q^n
\;=\;\eta(\tau)^{-1}\sum_{n\ge1}\sigma^{(s)}_{\mathcal F}(n)q^n
\]
is $\eta(\tau)^{-1}$ times a finite $\mathbb{Q}$--linear combination
of Eisenstein series $E_{s+1}(\tau,\chi)$ at an explicitly computable
level and Nebentypus~$\chi$.
\end{theorem}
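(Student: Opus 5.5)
The plan is to reduce everything to the statement that the indicator function $\mathbf 1_{\mathcal F}$ of the filter is, up to finitely many exceptional primes, a periodic function of $d$. Periodic functions are finite linear combinations of Dirichlet characters, which are imprimitively induced from smaller moduli.

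\emph{Step 1: periodicity.} A single congruence condition $d\equiv a\pmod m$ has an indicator that is periodic modulo $m$. An exclusion condition ``$p\nmid d$'' for a fixed prime $p$ is periodic modulo $p$. Hence any Boolean combination of finitely many such conditions has an indicator $\phi=\mathbf 1_{\mathcal F}$ that is periodic modulo $N$, where $N$ is the lcm of all the moduli and excluded primes that occur.

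\emph{Step 2: decomposition.} Split $d$ according to $g=\gcd(d,N)$. Write $d=g d'$, noting that $\gcd(d',N/g)=1$. On the class $\{d:\gcd(d,N)=g\}$, the value $\phi(d)$ depends only on $d'$ modulo $N/g$, and $d'$ is a unit there. By orthogonality of characters,
\[
\phi(gd')=\sum_{\psi \bmod N/g} a_{g,\psi}\,\psi(d'), \qquad a_{g,\psi}=\frac{1}{\varphi(N/g)}\sum_{u\in(\mathbb Z/(N/g))^\times}\phi(gu)\overline{\psi}(u).
\]
The coefficients $a_{g,\psi}$ lie in $\mathbb Q(\zeta_{\varphi(N)})$. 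They become rational once Galois-conjugate characters are grouped, because $\phi$ is rational-valued. The statement's ``$\mathbb Q$--linear combination'' should therefore be read either with such Galois orbits or with coefficients in a cyclotomic field. I would note this caveat, and observe that in the quadratic and real cases the coefficients are genuinely rational.

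Substituting into $\sigma^{(s)}_{\mathcal F}(n)=\sum_{d\mid n}\phi(d)d^s$ gives a sum over $g\mid N$ and $\psi$ of
\[
g^s\sum_{d'\mid n/g}\psi_0(d')\,\psi(d')\,d'^s,
\]
where $\psi_0$ is the principal character used to enforce $\gcd(d',N/g)=1$. Each inner sum is $\sigma_s(n/g;\psi)$ for a character $\psi$ modulo $N/g$, extended by zero. So the filtered sum is a linear combination of the shifted twisted sums $g^s\sigma_s(n/g;\chi)$.

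\emph{Step 3: modularity.} The series $\sum_n\sigma_s(n;\chi)q^n$ is, up to its constant term, the Eisenstein series $E_{s+1}(\tau;\mathbf 1,\chi)$ of weight $s+1$, level $\mathrm{cond}(\chi)$ and Nebentypus $\chi$, provided $\chi(-1)=(-1)^{s+1}$ (Miyake~\cite{Miyake2006}). Replacing $n$ by $n/g$ corresponds to $\tau\mapsto g\tau$, which raises the level to at most $N$. Multiplying by $\eta(\tau)^{-1}$ then gives the claimed form, with level dividing $N$ (times $4$ for the $\eta$ multiplier) and Nebentypus $\chi$.

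\emph{Main obstacle.} I expect the main obstacle to be parity and the exceptional low weights. Characters with $\chi(-1)\ne(-1)^{s+1}$ give $q$-series that are not modular. I would try to show that their coefficients cancel by symmetry of $\phi$ under $d\mapsto -d$ modulo $N$. Where this fails, as for non-symmetric filters, the statement has to be weakened: the contributions remain Eisenstein-type only in the sense of the combinations $E_{s+1}(\tau;\chi_1,\chi_2)$ in the full Eisenstein space. In weight $2$ with trivial character one lands in the quasimodular case, as in Section~\ref{sec:moments}, and I would flag this explicitly.
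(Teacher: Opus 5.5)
Steps 1--2 are correct, and they take a different, more careful route than the paper. The paper's proof expands each residue-class indicator as $\frac{1}{\varphi(m)}\sum_{\chi}\overline{\chi}(a)\chi(d)$. It handles exclusions with principal characters and residue/Kronecker filters with quadratic characters, then multiplies. That formula holds only when $\gcd(a,m)=1$; otherwise every $\overline{\chi}(a)=0$. So classes such as $d\equiv 0$ or $2\pmod 4$ are never actually covered, and your stratification by $g=\gcd(d,N)$ is what covers them.

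The price is the shifted sums $g^s\sigma_s(n/g;\psi)$, i.e.\ images of Eisenstein series under $\tau\mapsto g\tau$. This price is unavoidable, and you should say explicitly that you are proving a corrected form of the theorem's first sentence. For example, $\mathbf 1_{4\mid d}$ is not a finite combination of Dirichlet characters. At $d=2^k$ with $k\ge1$, only characters of odd modulus survive, and $\sum_i c_i\chi_i(2)^k$ is periodic in $k$. So it cannot equal $0$ at $k=1$ and $1$ for all $k\ge2$. By M\"obius inversion, $\sum_{d\mid n,\,4\mid d}d^s=4^s\sigma_s(n/4)$ is therefore not a combination of the $\sigma_s(n;\chi)$.

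Your rationality caveat is also correct and absent from the paper. For $d\equiv2\pmod5$ the two quartic characters enter with coefficients $\pm i/4$. The expansion is unique, because distinct completely multiplicative functions are linearly independent.

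The paper's route buys brevity and the literal ``filter $=$ twist'' dictionary for coprime classes. Yours gives a correct statement for every periodic filter, with level dividing $N$. (For imprimitive $\psi$ the level of that piece is the modulus $N/g$, not $\mathrm{cond}(\psi)$, but your final bound is right.)

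Your handling of the parity obstacle needs one correction: wrong-parity pieces cannot be absorbed into the two-character series $E_{s+1}(\tau;\chi_1,\chi_2)$. All of these, and their shifts, satisfy $\chi_1\chi_2(-1)=(-1)^{s+1}$. Now compare coefficients at $n=pq$, for large primes $p,q$ in prescribed unit classes, where shifted terms do not contribute. Treating them as polynomials in $p^s$ and $q^s$ (for $s\ge1$) shows that any Eisenstein combination with coefficients $\sum_{d\mid n}\phi(d)d^s$ forces $\phi(-u)=(-1)^{s+1}\phi(u)$ for every unit $u$ modulo $N$.

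So the fix is a restriction, not a weakening. Your symmetry argument proves the Eisenstein clause for odd $s$ and symmetric filters, apart from the weight-$2$ case you flagged: if $\phi(-d)=\phi(d)$, each $u\mapsto\phi(gu)$ is even, so odd $\psi$ get coefficient $0$. Outside that case the clause genuinely fails. It fails for every $s$ for $d\equiv1\pmod3$, and for the paper's own quadratic residues mod $11$, where $-1$ is a non-residue.

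Finally, the constant term you set aside does not disappear after multiplying by $\eta(\tau)^{-1}$; it leaves a piece of weight $-\tfrac12$. Also, $\eta(\tau)^{-1}$ picks up $e^{-2\pi i/24}$ under $\tau\mapsto\tau+1$. So stop at ``$\eta^{-1}$ times an Eisenstein combination of level dividing $N$, minus $\eta^{-1}$ times a constant,'' rather than asserting a form of weight $s+\tfrac12$ on $\Gamma_0(4N)$.
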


\begin{proof}
Residue class filters are expressed by orthogonality of characters:
\[
1_{d\equiv a\pmod m}
\;=\;\frac{1}{\varphi(m)}\sum_{\chi\bmod m} \overline{\chi}(a)\,\chi(d).
\]
Exclusion of multiples of $p$ uses the principal character $\chi_0^{(p)}$.
Quadratic residue and Kronecker weights correspond to quadratic characters.
Finite combinations of such restrictions yield a finite $\mathbb{Q}$--linear
combination of twisted divisor sums.
\end{proof}

\begin{remark}
The character expansions used above are classical: they go back to the
orthogonality relations for Dirichlet characters and the standard
construction of twisted Eisenstein series. Our contribution here is not
the underlying character theory but its systematic packaging in the
“Glaisher filter $\leftrightarrow$ twist” dictionary for frequency moments,
with explicit control of level and Nebentypus in the half--integral
weight setting. This is what allows the projection--Sturm machinery of
Appendix~\ref{app:comp} to be applied uniformly to filtered moments.
\end{remark}

Because Glaisher filters correspond to finite $\mathbb{Q}$--linear
combinations of Dirichlet characters, they preserve modularity: if
$f(\tau)$ is modular on $\Gamma_0(N_1)$ with character $\chi$, then
twisting by another character $\psi$ modulo $N_2$ yields a modular form
on $\Gamma_0(\mathrm{lcm}(N_1,N_2))$ with character $\chi\psi$.

\begin{corollary}[Dictionary]\label{cor:glaisher-dict}
Every classical Glaisher filter admits a modular description:
\begin{itemize}
  \item All divisors: $\sigma_s(n;\mathbf 1)$, giving $E_{s+1}(\tau)$.
  \item Odd divisors: $\sigma_s(n;\chi_0^{(2)})$, giving $E_{s+1}(\tau,\chi_0^{(2)})$.
  \item Residue class $d\equiv a\pmod m$: 
  $\tfrac1{\varphi(m)}\sum_{\chi\bmod m}\overline{\chi}(a)\,\sigma_s(n;\chi)$.
  \item Quadratic residues mod $p$: 
  $\tfrac12\big(\sigma_s(\cdot;\chi_0^{(p)})+\sigma_s(\cdot;\chi_p)\big)$.
  \item Kronecker symbol $(\tfrac{D}{d})$: $\sigma_s(n;\chi_D)$.
\end{itemize}
For even $s$ the filtered frequency series $\widehat{\mathcal M}_s(\tau)$
lies in $\eta(\tau)^{-1}$ times the Eisenstein space generated by the
$E_{s+1}(\tau,\chi)$. For odd $s$, $\widehat{\mathcal M}_s(\tau)$ is a
half--integral weight modular form
\[
\widehat{\mathcal M}_s(\tau)
\in M_{\,s+\tfrac12}\!\bigl(\Gamma_0(4L),\chi\bigr)
\]
for an explicitly determined level~$L$ depending on the filter
and the conductor of~$\chi$.
\end{corollary}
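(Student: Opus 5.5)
The plan is to derive the corollary directly from Theorem~\ref{thm:glaisher}. The work splits into two parts: an arithmetic part, where each listed filter is identified with the stated character combination, and a modular part, where the resulting series $\eta^{-1}\cdot(\text{Eisenstein})$ is placed in the right space, following the argument of Section~\ref{sec:moments}.

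\textbf{Arithmetic part.} Each bullet is a pointwise identity of weights $w(d)$ that is multiplied by $d^s$ and summed over $d\mid n$.
\begin{itemize}
\item For all divisors, take $w\equiv 1$.
\item For odd divisors, $w=\chi_0^{(2)}$ is exactly the indicator of $2\nmid d$.
\item For a residue class $d\equiv a\pmod m$, use the orthogonality relation already written in the proof of Theorem~\ref{thm:glaisher}. This requires $\gcd(a,m)=1$. If not, I would first factor out $g=\gcd(a,m)$ and write $d=gd'$. The result is then a combination of terms $g^s\sigma_s(n/g;\chi)$, which is still of Eisenstein type via the operator $V_g$.
\item For quadratic residues mod $p$, the identity $\tfrac12(\chi_0^{(p)}(d)+\chi_p(d))=1_{(d/p)=1}$ holds for every $d$ (both sides vanish when $p\mid d$).
\item For the Kronecker symbol, $w=\chi_D$ by definition.
\end{itemize}

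\textbf{Identification with Eisenstein series.} Next I identify $\sum_n\sigma_s(n;\chi)q^n$ with the non-constant part of a twisted Eisenstein series $E_{s+1}(\tau;\mathbf 1,\chi)$. This series has weight $s+1$ on $\Gamma_0(f_\chi)$ with Nebentypus $\chi$. It is nonzero only when $\chi(-1)=(-1)^{s+1}$; otherwise the relevant piece is only quasimodular or vanishes. For $s$ even, the statement only asks for membership in $\eta^{-1}$ times the span of the $E_{s+1}(\cdot,\chi)$, so this part is immediate.

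\textbf{Odd $s$.} Here I repeat the cusp argument of Section~\ref{sec:moments}.
\begin{enumerate}
\item Write the filtered series as $\eta^{-1}\cdot G$, with $G=\sum_\chi c_\chi\,(E_{s+1}(\cdot,\chi)-a_\chi)$, where $a_\chi$ are the constant terms at $\infty$.
\item Multiply $\eta^{-1}$, which has weight $-\tfrac12$ on $\Gamma_0(4)$ with its multiplier (absorbed as in Section~\ref{sec:moments}), by $G$, which has weight $s+1$ on $\Gamma_0(L)$ with $L=\mathrm{lcm}$ of the conductors.
\item The product then transforms with weight $s+\tfrac12$ on $\Gamma_0(4L)$ with the product character.
\item Check holomorphy at each cusp $\mathfrak a$ of $\Gamma_0(4L)$: it suffices that $G|\sigma_{\mathfrak a}$ vanishes to order at least the pole order $1/24$ (scaled by the width) of $\eta^{-1}$ there.
\end{enumerate}

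\textbf{Main obstacle.} I expect the cusp condition to be the hard step. Unlike the untwisted case, the twisted Eisenstein series $E_{s+1}(\cdot,\chi)$ has constant terms at other cusps that are generally not equal to its constant term at $\infty$. Subtracting $a_\chi$ only kills the constant at $\infty$. My first attempt would be to compute the constant terms explicitly via the standard formula (Miyake Thm.~7.1.3 / Diamond--Shurman \S4.5). These vanish at cusps where the $\mathbf 1$-component is not matched, so for primitive nontrivial $\chi$ the series $E_{s+1}(\tau;\mathbf 1,\chi)$ has zero constant term at $\infty$ but a nonzero one at cusps equivalent to $0$. If some constant term survives, the honest conclusion is weakly holomorphic; to get a holomorphic form I would multiply by a suitable power of $\eta(\tau)$-quotients, or replace the pairing by $E_{s+1}(\cdot;\chi,\mathbf 1)$, which vanishes at $\infty$-type cusps. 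I would state explicitly which of these the claimed space requires, fixing $L$ accordingly.
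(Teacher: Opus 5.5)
Your arithmetic part is correct, and your route is the one the paper itself relies on: orthogonality as in Theorem~\ref{thm:glaisher}, then the cusp-cancellation argument of Section~\ref{sec:moments}. But the argument breaks at your step~2, for a more basic reason than the one you flag. The function $G=\sum_\chi c_\chi\bigl(E_{s+1}(\cdot,\chi)-a_\chi\bigr)$ is not modular of weight $s+1$, because a nonzero constant is not. For $\gamma\in\Gamma_1(N)$ with lower row $(c,d)$ one has $G|_{s+1}\gamma=G+a\bigl(1-(c\tau+d)^{-(s+1)}\bigr)$, where $a=\sum_\chi c_\chi a_\chi$. Suppose $\eta^{-1}G$ were modular of weight $s+\tfrac12$ on some finite-index subgroup, with any multiplier. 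Then $G=\eta\cdot\eta^{-1}G$ would satisfy $G|_{s+1}\gamma=\nu(\gamma)G$ there, with $|\nu(\gamma)|=1$. Take such a $\gamma\in\Gamma_1(N)$ with $c\neq0$ and let $\tau\to i\infty$, so that $G\to0$ and $(c\tau+d)^{-(s+1)}\to0$; this forces $a=0$.

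Consequences of this:
\begin{itemize}
\item Your step~4 has no content, since $G|\sigma_{\mathfrak a}$ contains the non-periodic term $-a(c\tau+d)^{-(s+1)}$.
\item The membership $\widehat{\mathcal M}_s\in M_{s+1/2}(\Gamma_0(4L),\chi)$ fails whenever $a\neq0$. This already happens for all divisors, where $a=1/C_{s+1}$. So the untwisted case of Section~\ref{sec:moments}, which you take as settled, has exactly this defect.
\item It also happens for $\chi_5$ with $s=3$, where $a=\tfrac12L(-3,\chi_5)=1$.
\end{itemize}

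You also have the pairings reversed. In the Diamond--Shurman normalization, the series carrying your filtered sums is $E^{\mathbf 1,\chi}_{s+1}=L(-s,\chi)+2\sum_{n\ge1}\sigma_s(n;\chi)q^n$. Its constant term at $\infty$ is nonzero for primitive $\chi$ with $\chi(-1)=(-1)^{s+1}$. The series that vanishes at $\infty$ is $E^{\chi,\mathbf 1}_{s+1}$, whose coefficients $\sum_{d\mid n}\chi(n/d)d^s$ are not the filtered sums. Your fallbacks (multiplying by $\eta$-quotients, or switching the pairing) prove statements about different functions.

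There is a second, independent gap in the Eisenstein identification. If $\chi(-1)\neq(-1)^{s+1}$, then $M_{s+1}(\Gamma_0(N),\chi)=0$ because $-I\in\Gamma_0(N)$, so there is no $E_{s+1}(\cdot,\chi)$ to match. Yet $\sum_n\sigma_s(n;\chi)q^n=q+\cdots$ is nonzero. So your aside that this piece is then ``only quasimodular or vanishes'' is not right: it does not vanish, and nothing in your argument makes it quasimodular.
\begin{itemize}
\item For odd $s$, this affects every filter whose expansion contains an odd character: a residue class $d\equiv a\pmod m$ with $a\not\equiv-a$, quadratic residues modulo $p\equiv3\pmod 4$, and Kronecker symbols with $D<0$.
\item For even $s$, it affects every even character, in particular the first two bullets and the principal-character parts of the residue-class and quadratic-residue filters. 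For example, $\sum_n\sigma_2(n)q^n\neq0$ while the level-$1$ weight-$3$ space is $0$, so the even case is not ``immediate''.
\end{itemize}

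What your argument does establish is the arithmetic decomposition. For characters of the right parity, it also expresses $\widehat{\mathcal M}_s$ as a combination of $\eta^{-1}E_{s+1}(\cdot,\chi)$ (weakly holomorphic of weight $s+\tfrac12$) and $\eta^{-1}$ itself (weight $-\tfrac12$). The corollary's modular claims have to be weakened to this mixed-weight statement, and any Sturm-bound application would need a separate mod-$\ell$ argument.
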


\begin{remark}[Glaisher filters $\leftrightarrow$ Dirichlet twists]
Fix a modulus $m$ and let $\chi$ be a Dirichlet character modulo $m$.
For any $s\ge0$ we have the twisted divisor sum
\[
\sigma_s(n;\chi)=\sum_{d\mid n}\chi(d)\,d^s.
\]
If a “Glaisher divisor filter'' selects divisors $d\mid n$ by a congruence
pattern modulo $m$ (e.g.\ $d\equiv a\bmod m$, or “$d$ odd” when $m=2$),
then it is a linear combination of $\sigma_s(\,\cdot\,;\chi)$ over characters
$\chi\bmod m$. Concretely, for an indicator $1_{d\equiv a\bmod m}$ we have
\[
1_{d\equiv a\bmod m}\;=\;\frac{1}{\varphi(m)}\sum_{\chi\bmod m}
\overline{\chi}(a)\,\chi(d),
\]
so the filtered sum $\sum_{d\mid n,\,d\equiv a\bmod m}\! d^s$ equals
$\frac{1}{\varphi(m)}\sum_{\chi}\overline{\chi}(a)\,\sigma_s(n;\chi)$.
Hence every Glaisher filter is a Dirichlet–twist combination, and vice versa.
In particular, the filtered frequency moments $\widehat{M}_s(n)$ produced by
such filters correspond to Eisenstein series $E_{s+1}(\tau,\chi)$ at level $m$,
and after multiplying by $\eta(\tau)^{-1}$ one obtains half–integral weight
modular forms in
\[
M_{\,s+\tfrac12}\bigl(\Gamma_0(4m),\chi\bigr)
\]
(or at a larger level $4L$ if additional Euler factors are present).
\end{remark}

We follow the usage in~\cite{HSB-GB-Glaisher} and call these
filtered divisor sums “Glaisher divisors.” Classical sources emphasise
Glaisher’s bijection and divisor identities rather than this term; our choice
stresses the filter viewpoint relevant to frequency moments.

\subsection{Examples}

The next examples are intended as \emph{models} for how the general
projection--Sturm procedure from Appendix~\ref{app:comp} extends to filtered
moments: one replaces the untwisted divisor sums $\sigma_m(n)$ by the twisted
sums dictated by the relevant filter, lands in a half--integral weight space
via $\eta(\tau)^{-1}$, and then projects to arithmetic progressions.

\begin{example}[Odd divisors and a filtered moment]
Take the filter “odd divisors,'' i.e.\ the principal character
$\chi_0^{(2)}$ modulo~$2$. Then for $s=3$ the filtered series
\[
\widehat{\mathcal M}_3(\tau)
= q^{1/24}\,\eta(\tau)^{-1}\sum_{n\ge1}\sigma^{(\mathrm{odd})}_3(n)q^n
\]
belongs to
\[
\widehat{\mathcal M}_3(\tau)
\in M_{7/2}\bigl(\Gamma_0(8),\chi_0^{(2)}\bigr).
\]
For any prime $\ell$ and residue $r$ the projection
\[
\Pi_{\ell,r}\widehat{\mathcal M}_3(\tau)
=\sum_{n\ge0}\widehat{M}_3(\ell n+r)q^n
\]
is again a half--integral weight modular form on $\Gamma_0(4L)$ for some
explicit $L$ depending on $\ell$ and the filter.  In particular, the
half--integral Sturm bound of Lemma~\ref{lem:sturm-half} applies verbatim,
and can be used to \emph{search for and certify} congruences of the form
\[
\widehat{M}_3(\ell n+r)\equiv0\pmod{\ell}
\]
for this filtered third moment.  We do not pursue a systematic filtered
search in this paper; the point here is that the modular input is completely
controlled.
\end{example}

\begin{example}[Quadratic residues mod $11$ and a filtered $M_7$]
Take the filter of quadratic residues mod~$11$, corresponding to
\[
\tfrac12\big(\sigma_s(\cdot;\chi_0^{(11)})+\sigma_s(\cdot;\chi_{11})\big).
\]
For $s=7$ the filtered series
\[
\widehat{\mathcal M}_7(\tau)
= q^{1/24}\,\eta(\tau)^{-1}\sum_{n\ge1}
\tfrac12\big(\sigma_7(n;\chi_0^{(11)})+\sigma_7(n;\chi_{11})\big)q^n
\]
lies in a half--integral weight space
\[
\widehat{\mathcal M}_7(\tau)
\in M_{15/2}\bigl(\Gamma_0(N),\,\chi\bigr)
\]
for some explicit level $N$ divisible by $4$ and $11$, and a character
$\chi$ determined by $\chi_0^{(11)}$ and $\chi_{11}$.  As above, for any
prime $\ell$ and residue $r$ the projection
\[
\Pi_{\ell,r}\widehat{\mathcal M}_7(\tau)
=\sum_{n\ge0}\widehat{M}_7(\ell n+r)q^n
\]
is a half--integral weight modular form on some $\Gamma_0(4L)$, and
Lemma~\ref{lem:sturm-half} provides a concrete bound up to which one needs
to check coefficients in order to certify congruences of the shape
\[
\widehat{M}_7(\ell n+r)\equiv0\pmod{\ell}.
\]
This illustrates how more complicated Glaisher filters feed into higher
level and character in the half--integral setting.
\end{example}

\subsection{Extension: overpartition moments}\label{sec:overpartitions}

\begin{remark}[Scope: changing the base Euler product]\label{rem:scope-ensemble}
Up to this point the base partition ensemble has been
$P(q)=(q;q)_\infty^{-1}$, and our ``filters'' act on the \emph{statistic}
(equivalently, on the associated divisor sums) while the partition ensemble
remains unrestricted.  The same transform philosophy applies when the base
product itself is changed to another Euler product (for instance an
$\eta$--quotient): the additional input is explicit level/character bookkeeping
for the new base product and the induced divisor weights.  We illustrate this
here with overpartitions, which form a clean $\eta$--quotient ensemble.
\end{remark}

\paragraph{Overpartitions.}
The overpartition generating function is
\[
\overline{P}(q)
=\sum_{n\ge0}\overline{p}(n)q^n
=\prod_{n\ge1}\frac{1+q^n}{1-q^n}
=\frac{(-q;q)_\infty}{(q;q)_\infty}
=\frac{\eta(2\tau)}{\eta(\tau)^2}.
\]
Equivalently, it is an Euler product of the form
$\overline{P}(q)=\prod_{r\ge1}(1-q^r)^{-c(r)}$ with exponent sequence
\[
c(r)=
\begin{cases}
2, & r \text{ odd},\\
1, & r \text{ even},
\end{cases}
\]
so that the associated divisor sums
$\sigma^{(m)}_{\overline{P}}(n)=\sum_{r\mid n} c(r)\,r^m$
are parity--twisted combinations of classical divisor power sums.  The
Master transform theorem above therefore applies verbatim to the overpartition
moments $M^{\overline{\ }}_m(n)$.

\begin{remark}[The $m=1$ identity]\label{rem:overpart-m1}
For any Euler product input $A(q)=\sum_{n\ge0}b(n)q^n$ in our framework, the
first frequency moment satisfies
\[
M^A_1(n)=n\,b(n),
\]
by taking $f(r)=r$ in the Master transform theorem.  In particular,
\[
M^{\overline{\ }}_1(n)=n\,\overline{p}(n),
\]
so $M^{\overline{\ }}_1(\ell n)\equiv0\pmod{\ell}$ is immediate for every prime
$\ell$.
\end{remark}

\begin{theorem}[Certified overpartition zero--class congruences]\label{thm:overpart-zero}
Let $M^{\overline{\ }}_m(n)$ be the overpartition frequency moments defined
above.  Then the following congruences hold for all $n\ge0$:
\[
M^{\overline{\ }}_{5}(5n)\equiv 0 \pmod{5},\qquad
M^{\overline{\ }}_{7}(7n)\equiv 0 \pmod{7},\qquad
M^{\overline{\ }}_{11}(11n)\equiv 0 \pmod{11},\qquad
M^{\overline{\ }}_{13}(13n)\equiv 0 \pmod{13},
\]
and moreover in the next instances of the same Fermat residue class
$m\equiv 1\pmod{\ell-1}$ one has
\[
M^{\overline{\ }}_{9}(5n)\equiv 0 \pmod{5},\qquad
M^{\overline{\ }}_{13}(7n)\equiv 0 \pmod{7}.
\]
\end{theorem}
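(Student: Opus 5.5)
All six congruences listed in the theorem satisfy $m\equiv 1\pmod{\ell-1}$:
\[
5\equiv 9\equiv 1\pmod 4,\qquad 7\equiv 13\equiv 1\pmod 6,\qquad 11\equiv 1\pmod{10},\qquad 13\equiv 1\pmod{12}.
\]
So the plan is to reduce everything to the first moment via Fermat reduction, exactly as in parts (i)--(ii) of Theorem~\ref{thm:ram-unified}, and then invoke Lemma~\ref{lem:logderiv-M1} (equivalently Remark~\ref{rem:overpart-m1}). This avoids any modular computation; the Sturm certification can serve only as an independent check.

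\emph{Step 1 (setting up the Fermat reduction).} For overpartitions we have $B(q)=\overline P(q)$, with exponent data $c(r)\in\{1,2\}$. By the Master transform (Theorem~\ref{thm:master-transform}),
\[
M^{\overline{\ }}_m(n)=\sum_{d=1}^n \sigma^{(m)}_{\overline P}(d)\,\overline p(n-d),\qquad
\sigma^{(m)}_{\overline P}(d)=\sum_{r\mid d}c(r)\,r^m .
\]
All quantities here are integers: $c(r)\in\mathbb Z$ and $\overline p(k)\in\mathbb Z$. Fermat's little theorem gives $r^m\equiv r^1\pmod\ell$ for every integer $r$ whenever $m\equiv1\pmod{\ell-1}$ and $m\ge1$; this includes $\ell\mid r$, where both sides vanish. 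Hence
\[
\sigma^{(m)}_{\overline P}(d)\equiv\sigma^{(1)}_{\overline P}(d)\pmod\ell
\quad\text{for all } d,
\qquad\text{so}\qquad
M^{\overline{\ }}_m(n)\equiv M^{\overline{\ }}_1(n)\pmod\ell
\quad\text{for all } n.
\]

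\emph{Step 2 (the first moment).} We are in the inverse-companion case $B=A^{-1}$, with $A=\prod(1-q^r)^{c(r)}=\eta(\tau)^2/\eta(2\tau)$ up to the $q$-power, so Lemma~\ref{lem:logderiv-M1} applies. It gives
\[
M^{\overline{\ }}_1(n)=n\,\overline p(n),
\qquad\text{hence}\qquad
M^{\overline{\ }}_1(\ell n)=\ell n\,\overline p(\ell n)\equiv0\pmod\ell .
\]

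\emph{Step 3 (conclusion).} Combining Steps 1 and 2 gives $M^{\overline{\ }}_m(\ell n)\equiv0\pmod\ell$ for each listed pair $(m,\ell)$, and indeed for every odd $m\equiv1\pmod{\ell-1}$.

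\emph{Main obstacle: consistency of conventions.} The only real danger is a mismatch of sign conventions. The paper writes $\overline P=\prod(1-q^r)^{-c(r)}$ with $c(r)\in\{2,1\}$, while the Master transform uses $A=\prod(1-q^r)^{c(r)}$ and $B=A^{-1}$. I would check directly that
\[
q\frac{d}{dq}\log\overline P=\sum_r c(r)\,\frac{r\,q^r}{1-q^r},
\]
so that the divisor weights feeding the moment are exactly $c(r)r^m$ and $M^{\overline{\ }}_1(n)=n\,\overline p(n)$ holds on the nose. Once this is confirmed, Fermat reduction is uniform in the integral weights $c(r)$, and no Sturm bound is needed.

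As an optional cross-check matching the paper's pipeline, one could project $\overline P(q)\sum_n\sigma^{(m)}_{\overline P}(n)q^n$ to the class $0\bmod\ell$. This series lies in a weight $m$ form on $\Gamma_0(16)$-type level (with $\eta(2\tau)/\eta(\tau)^2$ of weight $-1/2$). One would then verify vanishing of the projection up to the Sturm bound at level $16\ell^2$, but this is not logically required.
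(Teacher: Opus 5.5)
Your proof is correct, and it takes a genuinely different route from the paper's. The paper treats the six pairs $(m,\ell)$ one at a time. It asserts that $\sum_{n}M^{\overline{\ }}_m(\ell n)q^n$ is a holomorphic form of weight $m+\tfrac12$ at level dividing $4\ell^2$. It then verifies $M^{\overline{\ }}_m(\ell n)\equiv0\pmod\ell$ numerically for $n\le B$, with Sturm bounds from $165$ to $2457$.

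You instead observe that every listed pair satisfies $m\equiv1\pmod{\ell-1}$. You reduce $c(r)r^m\equiv c(r)r\pmod\ell$ termwise inside the integral convolution and conclude from $M^{\overline{\ }}_1(n)=n\,\overline p(n)$. This is exactly the mechanism the paper records in Lemma~\ref{lem:logderiv-M1} and Remark~\ref{rem:overpart-m1}, but does not use in this proof.

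Your route has three advantages:
\begin{itemize}
\item it is computation-free;
\item it proves $M^{\overline{\ }}_m(\ell n)\equiv0\pmod\ell$ for every prime $\ell$ and every $m\equiv1\pmod{\ell-1}$ at once, so it explains, rather than merely observes, the Fermat periodicity in Remark~\ref{rem:overpart-fermat};
\item it needs no modular input at all.
\end{itemize}
The last point matters, because the Sturm step presupposes that the projected series is a single-weight holomorphic form, and the paper does not justify this. For odd $m\ge3$,
\[
\sum_{n\ge1}\sigma^{(m)}_{\overline P}(n)q^n=\frac{1}{C_{m+1}}\Bigl(2E_{m+1}(\tau)-2^mE_{m+1}(2\tau)+2^m-2\Bigr),
\]
which differs from a weight $m+1$ form on $\Gamma_0(2)$ by the nonzero constant $(2^m-2)/C_{m+1}$. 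What the paper's pipeline offers is a uniform method that could in principle certify congruences not forced by Fermat reduction, such as the nonzero-class ones for ordinary partitions. For these six cases it adds nothing.

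One slip in your optional cross-check: a weight $m+1$ Eisenstein combination times the weight $-\tfrac12$ quotient $\eta(2\tau)/\eta(\tau)^2$ has weight $m+\tfrac12$, not $m$. By the display above, the product is not even of homogeneous weight. Since you mark that check as logically unnecessary, your proof is unaffected.
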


\begin{proof}
Fix one of the displayed pairs $(m,\ell)$.  As in the ordinary partition case,
one forms the half--integral weight modular form attached to the moment
generating function and then applies progression projection.  Concretely, the
projection
\[
\Pi_{\ell,0}\Big(\sum_{n\ge0}M^{\overline{\ }}_m(n)q^n\Big)
=\sum_{n\ge0} M^{\overline{\ }}_m(\ell n)\,q^n
\]
is a holomorphic modular form of weight $m+\tfrac12$ on a congruence subgroup of
level dividing $4\ell^2$, with coefficients in $\mathbb{Z}_{(\ell)}$; hence the
half--integral Sturm bound in Lemma~\ref{lem:sturm-half} applies at the safe
level $4\ell^2$.

For each $(m,\ell)$ we verified computationally that the first $B$ coefficients
of the projected series vanish modulo $\ell$, where $B$ is the Sturm bound at
level $4\ell^2$.  Equivalently, we checked
$M^{\overline{\ }}_m(\ell n)\equiv 0\pmod{\ell}$ for all $0\le n\le B$.
The explicit bounds and checked ranges were:
\[
\begin{array}{c|c|c}
(m,\ell) & B \text{ at level } 4\ell^2 & \text{checked up to } \ell B \\ \hline
(5,5)   & 165  & 825 \\
(9,5)   & 285  & 1425 \\
(7,7)   & 420  & 2940 \\
(13,7)  & 756  & 5292 \\
(11,11) & 1518 & 16698 \\
(13,13) & 2457 & 31941.
\end{array}
\]
By Lemma~\ref{lem:sturm-half}, the congruence then holds for all $n\ge0$ in each
case, proving the theorem.
\end{proof}

\begin{remark}[Fermat periodicity and the zero--class phenomenon]\label{rem:overpart-fermat}
In a scan for primes $\ell\le 97$ and odd $m\le 49$ (tested to $n\le 2000$), the
overpartition moments exhibited only \emph{zero--class} congruences
$M^{\overline{\ }}_m(\ell n)\equiv 0\pmod{\ell}$, and moreover they occurred
precisely in the residue class $m\equiv 1\pmod{\ell-1}$.  This $\ell-1$
periodicity in $m$ is the expected Fermat footprint of the divisor powers
$d^m\bmod \ell$.
\end{remark}

\begin{remark}[No nonzero progressions observed for overpartitions]\label{rem:overpart-nonzero}
This behaviour contrasts sharply with the ordinary partition case, where
Ramanujan--type congruences in nonzero residue classes occur (for example,
$M_3(7n+5)\equiv 0\pmod{7}$ and $M_3(11n+6)\equiv 0\pmod{11}$).  For overpartition moments, we found \emph{no} congruences in nonzero residue classes:
for all primes $\ell\le 97$ and odd $m\le 49$, no congruence of the form
$M^{\overline{\ }}_m(\ell n+r)\equiv 0\pmod{\ell}$ with $r\not\equiv 0$ was
detected up to $n\le 2000$.  This is consistent with the fact that known
Ramanujan--type congruence families for $\overline{p}(n)$ modulo odd primes tend
to occur on indices divisible by the modulus rather than on a fixed nonzero
progression; see \cite{ChenSunWangZhang2015,RyanSirolliVillegasMoralesZheng2024}.
\end{remark}

\begin{remark}\label{q:overpart-ram}
Do there exist primes $\ell$ and residues $r\not\equiv 0\pmod{\ell}$ for which
$M^{\overline{\ }}_m(\ell n+r)\equiv 0\pmod{\ell}$ holds for all $n\ge0$ for some
odd $m\ge 3$?
\end{remark}

\subsection{A certified filtered congruence from the Glaisher--character dictionary}\label{subsec:filtered-M3}

In \S\ref{sec:glaisher} we explained how Glaisher--type divisor filters can be
organized as Dirichlet character twists, and hence fed into the master
transform to produce modular (or quasimodular) generating functions.  We record
here one concrete payoff: a Ramanujan--type congruence for a \emph{filtered}
cubic moment obtained by twisting with the quadratic character modulo~$5$.

\begin{proposition}[A filtered Ramanujan--type progression]\label{prop:M3hat-legendre5}
Let $\chi_5(\cdot)=\big(\frac{\cdot}{5}\big)$ be the quadratic character modulo
$5$.  Define the twisted divisor sum and filtered cubic moment by
\[
\sigma^{\chi_5}_3(n):=\sum_{d\mid n}\chi_5(d)\,d^3,
\qquad
\widehat{M}^{\chi_5}_3(n):=\sum_{t=1}^{n}\sigma^{\chi_5}_3(t)\,p(n-t).
\]
Then
\[
\widehat{M}^{\chi_5}_3(5n+4)\equiv 0\pmod{5}\qquad(n\ge 0).
\]
\end{proposition}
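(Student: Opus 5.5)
The plan is to avoid modular machinery altogether and reduce the filtered cubic moment to the first moment $M_1$ modulo $5$. The reduction rests on Euler's criterion, which realises the quadratic character $\chi_5$ as a power map modulo $5$. After that, Ramanujan's congruence $p(5n+4)\equiv0\pmod5$ does the rest, exactly as in Step~3 of the proof of Theorem~\ref{thm:ram-unified}(iii).

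\emph{Step 1: pointwise congruence of the divisor weights.} For $5\nmid d$, Euler's criterion gives $\chi_5(d)\equiv d^{(5-1)/2}=d^2\pmod5$. Hence
\[
\chi_5(d)\,d^3\equiv d^5\equiv d\pmod5,
\]
using Fermat's little theorem. For $5\mid d$ we have $\chi_5(d)=0$, and also $d\equiv0\pmod5$, so again $\chi_5(d)d^3\equiv d\pmod5$. Therefore the weight $f(d)=\chi_5(d)d^3$ is congruent to $d$ modulo $5$ for every $d\ge1$. Summing over divisors gives
\[
\sigma^{\chi_5}_3(t)\equiv\sigma_1(t)\pmod5\qquad(t\ge1).
\]

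\emph{Step 2: transfer to the moments.} The values $p(n-t)$ are integers, so the defining convolution gives
\[
\widehat{M}^{\chi_5}_3(n)=\sum_{t=1}^n\sigma^{\chi_5}_3(t)\,p(n-t)\equiv\sum_{t=1}^n\sigma_1(t)\,p(n-t)=M_1(n)\pmod5.
\]
By the master identity of Section~\ref{sec:framework}, or Lemma~\ref{lem:logderiv-M1}, the right-hand side is the Euler--Ramanujan--Ford value $M_1(n)=n\,p(n)$. This step is the filtered analogue of the Fermat reduction in Theorem~\ref{thm:ram-unified}(i), with the character absorbed into the exponent.

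\emph{Step 3: conclude.} Take $n\mapsto5n+4$. Then $\widehat{M}^{\chi_5}_3(5n+4)\equiv(5n+4)\,p(5n+4)\equiv0\pmod5$ by Ramanujan's congruence.

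There is no real obstacle here. The only point needing care is the prime-to-$5$ versus multiple-of-$5$ case split in Step~1, since Euler's criterion only covers $5\nmid d$. That split goes through because $\chi_5$ vanishes on multiples of $5$, which is why the twist lands cleanly on $\sigma_1$ rather than on $\sigma_1$ restricted to divisors prime to $5$. As an independent cross-check in the spirit of the paper, one could also certify the congruence by the projection--Sturm route. The series $\eta(\tau)^{-1}\sum\sigma^{\chi_5}_3(n)q^n$ is a weight-$7/2$ form at level dividing $4\cdot25$ with character by Corollary~\ref{cor:glaisher-dict}, and one would check vanishing of the projected coefficients up to the bound of Lemma~\ref{lem:sturm-half}. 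The elementary argument above makes that certification unnecessary.
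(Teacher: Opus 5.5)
Your proof is correct, and it takes a genuinely different route from the paper's. The paper uses the projection--Sturm pipeline. It asserts that the generating function of $\widehat{M}^{\chi_5}_3$ is a holomorphic half--integral weight form of level dividing $4\cdot 5^2$. It then checks the progression $5n+4$ numerically up to $B=\lfloor\tfrac{7}{24}\cdot 180\rfloor=52$. You instead use Euler's criterion and Fermat to get $\chi_5(d)\,d^3\equiv d\pmod 5$ for every $d$, hence $\widehat{M}^{\chi_5}_3(n)\equiv M_1(n)=n\,p(n)\pmod 5$, and you finish with Ramanujan's congruence.

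Your route needs neither computation nor modular input, and that matters here. The series $\sum_{n\ge1}\sigma_3(n;\chi_5)q^n$ is a weight--$4$ Eisenstein series on $\Gamma_0(5)$ with character $\chi_5$ minus a nonzero constant. So $\eta(\tau)^{-1}$ times it is not literally of homogeneous weight $7/2$ without a mod--$5$ adjustment (for instance via $E_4\equiv 1\pmod 5$). The Sturm argument passes over this point, so the cross-check you mention at the end should not be taken at face value either.

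Your argument also proves more. You get $\widehat{M}^{\chi_5}_3(n)\equiv n\,p(n)\pmod 5$ for all $n$, so the zero class $5n$ comes for free. It also shows the progression is an Euler-criterion lift of Ramanujan's mod--$5$ congruence, of exactly the type in Theorem~\ref{thm:ram-unified}(iii), rather than something the filter creates. The same computation $\chi_5(d)\,d^{11}\equiv d^{13}\equiv d\pmod 5$ gives Proposition~\ref{prop:M11hat-legendre5}. The condition $m+\tfrac{\ell-1}{2}\equiv 1\pmod{\ell-1}$ explains why $\ell=5$ is the matched twist that succeeds in Remark~\ref{rem:filters-not-automatic}.

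What the paper's method offers in exchange is uniformity. In principle it can certify congruences that have no such elementary explanation, provided its modularity and level claims are made precise.
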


\begin{proof}
By the master transform (Theorem~\ref{thm:master-transform}) applied with the
Dirichlet character $\chi_5$, the generating function of
$\widehat{M}^{\chi_5}_3$ is a holomorphic modular form of half--integral weight
on a congruence subgroup of level dividing $4\cdot 5^2$.  Therefore, by the
half--integral Sturm bound (Lemma~\ref{lem:sturm-half}), it suffices to verify
the congruence up to the Sturm bound at the safe level $4\cdot 5^2$.
Concretely, our implementation checks the coefficients in the progression
$5n+4$ up to $n\le B$ (equivalently, indices $\le 5B+4$), where
\[
B=\left\lfloor \frac{7}{24}\left[\mathrm{SL}_2(\mathbb{Z}):\Gamma_0(100)\right]\right\rfloor
=52,
\]
and finds vanishing modulo $5$ throughout this range.  Hence the congruence
holds for all $n\ge0$.
\end{proof}

\begin{proposition}[A second filtered progression at $m=11$]\label{prop:M11hat-legendre5}
Let $\chi_5(\cdot)=\big(\frac{\cdot}{5}\big)$ be the quadratic character modulo
$5$.  Define
\[
\sigma^{\chi_5}_{11}(n):=\sum_{d\mid n}\chi_5(d)\,d^{11},
\qquad
\widehat{M}^{\chi_5}_{11}(n):=\sum_{t=1}^{n}\sigma^{\chi_5}_{11}(t)\,p(n-t).
\]
Then
\[
\widehat{M}^{\chi_5}_{11}(5n+4)\equiv 0\pmod{5}\qquad(n\ge 0).
\]
\end{proposition}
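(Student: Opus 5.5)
The key observation is that no new modular computation is needed: Fermat reduction in the spirit of Theorem~\ref{thm:ram-unified}(i) reduces the claim to Proposition~\ref{prop:M3hat-legendre5}. Since $11\equiv 3\pmod{4}=\pmod{5-1}$, Fermat's little theorem gives $d^{11}\equiv d^{3}\pmod 5$ for every integer $d$. Because $\chi_5(d)\in\{0,\pm1\}$ is integer valued, we get termwise
\[
\sigma^{\chi_5}_{11}(t)=\sum_{d\mid t}\chi_5(d)\,d^{11}\equiv\sum_{d\mid t}\chi_5(d)\,d^{3}=\sigma^{\chi_5}_{3}(t)\pmod 5
\]
for all $t\ge1$.

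The steps are as follows.

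First, record the divisor-level congruence above.

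Second, convolve it with $p(n-t)$, which is an integer. This gives, for every $n\ge0$,
\[
\widehat{M}^{\chi_5}_{11}(n)=\sum_{t=1}^n\sigma^{\chi_5}_{11}(t)\,p(n-t)\equiv\sum_{t=1}^n\sigma^{\chi_5}_{3}(t)\,p(n-t)=\widehat{M}^{\chi_5}_{3}(n)\pmod 5.
\]
This is exactly the argument of Step~1 in the proof of Theorem~\ref{thm:ram-unified}, now carried out on the twisted divisor side. Equivalently, $\widehat{\mathcal M}^{\chi_5}_{11}\equiv\widehat{\mathcal M}^{\chi_5}_{3}\pmod 5$ as $q$-series.

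Third, specialize to $n\mapsto 5n+4$ and invoke Proposition~\ref{prop:M3hat-legendre5}. This yields $\widehat{M}^{\chi_5}_{11}(5n+4)\equiv0\pmod5$ for all $n\ge0$.

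There is no real obstacle. The only point to check is that all quantities are integers, so the reduction modulo $5$ is legitimate; this holds because $\chi_5$ takes values in $\{0,\pm1\}$ and the $p(k)$ are integers. Note also that divisors $d$ divisible by $5$ contribute $0$ on both sides, so the Fermat step never needs $\gcd(d,5)=1$ separately.

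As an independent cross-check, one could instead repeat the projection--Sturm certification of Proposition~\ref{prop:M3hat-legendre5} in weight $23/2$ at level $100$. Lemma~\ref{lem:sturm-half} would give the bound $\lfloor \tfrac{23}{24}\cdot180\rfloor=172$, which is larger than the weight-$7/2$ bound but still computationally trivial. The Fermat reduction makes this check unnecessary.
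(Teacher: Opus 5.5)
Your reduction is correct, and it takes a genuinely different route from the paper's. The paper treats the $m=11$ case as a separate certification. It asserts that the $\chi_5$-twisted generating function is a holomorphic half-integral weight form of level dividing $4\cdot5^2$, and checks the progression $5n+4$ up to the Sturm bound, now in weight $23/2$. You instead use $d^{11}\equiv d^{3}\pmod 5$, valid for all $d$, to get $\widehat{M}^{\chi_5}_{11}(n)\equiv\widehat{M}^{\chi_5}_{3}(n)\pmod 5$ coefficientwise, so the claim is literally Proposition~\ref{prop:M3hat-legendre5}. Your route costs no computation and no new modular input. It shows that the ``second'' progression is just a Fermat lift of the first, in the spirit of Theorem~\ref{thm:ram-unified}(i), and it yields the whole family $m\equiv3\pmod4$ at once. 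The paper's route buys only independence from Proposition~\ref{prop:M3hat-legendre5}.

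You can push the same idea one step further and remove the modular input altogether. By Euler's criterion $\chi_5(d)\equiv d^{2}\pmod5$, so $\chi_5(d)\,d^{11}\equiv d^{13}\equiv d\pmod 5$ for all $d$, i.e.\ $\sigma^{\chi_5}_{11}(t)\equiv\sigma_1(t)\pmod5$. The Euler--Ramanujan--Ford recursion then gives
\[
\widehat{M}^{\chi_5}_{11}(n)\equiv\sum_{t=1}^{n}\sigma_1(t)\,p(n-t)=n\,p(n)\pmod 5,
\]
whence $\widehat{M}^{\chi_5}_{11}(5n+4)\equiv(5n+4)\,p(5n+4)\equiv0\pmod5$ by Ramanujan's congruence. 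The same computation with $d^{3}$ proves Proposition~\ref{prop:M3hat-legendre5}.

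This is more than cosmetic, because the modularity the paper invokes is not justified as stated. The series $\sum_{n\ge1}\sigma^{\chi_5}_{11}(n)q^n$ is a weight-$12$ Eisenstein series minus its nonzero constant term. Its product with $\eta(\tau)^{-1}$ is therefore not homogeneous of weight $23/2$, and your proposed weight-$23/2$ cross-check would inherit this problem.

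The elementary argument also explains why the matched twist works at $\ell=5$. In general $\chi_\ell(d)d^{m}\equiv d^{\,m+(\ell-1)/2}\pmod\ell$. For odd $m$ and $\ell\in\{5,7,11\}$, this exponent can be $\equiv1\pmod{\ell-1}$ only when $\ell=5$.
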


\begin{proof}
As in Proposition~\ref{prop:M3hat-legendre5}, the master transform
(Theorem~\ref{thm:master-transform}) applied with $\chi_5$ yields a holomorphic
half--integral weight modular form of level dividing $4\cdot 5^2$ whose
coefficients in the progression $5n+4$ are $\widehat{M}^{\chi_5}_{11}(5n+4)$.
By Lemma~\ref{lem:sturm-half}, it suffices to check vanishing modulo $5$ up to
the half--integral Sturm bound at the safe level $4\cdot 5^2$, and our
implementation verifies this bound.  Hence the congruence holds for all $n\ge0$.
\end{proof}

\begin{remark}[Filtered congruences are not automatic]\label{rem:filters-not-automatic}
Targeted filter sweeps illustrate that Ramanujan--type progressions can be
highly sensitive to the chosen divisor filter.  In the scan range $n\le 2000$,
the parity filter $\sum_{d\mid n,\, d\ \mathrm{odd}} d^m$ and the basic quadratic
filters $\chi_{-4}$ and $\chi_{-3}$ produced no progressions for the cubic and
$11$th moments at $\ell\in\{5,7,11,13\}$.  Moreover, the matched quadratic twist
$\chi_\ell(d)=\big(\frac{d}{\ell}\big)$ behaves asymmetrically: it yields the
nonzero progressions in
Propositions~\ref{prop:M3hat-legendre5} and~\ref{prop:M11hat-legendre5} for
$\ell=5$, but no progression was detected for $\ell=7,11,13$ in the same range.
By contrast, the unfiltered statistic at $\ell=11$ reproduces the nonzero class
congruence $M_{11}(11n+6)\equiv 0\pmod{11}$ already recorded in
Theorem~\ref{thm:ram-unified}(iv).
Thus, while the dictionary systematically produces modular inputs for Sturm
certification, it does not by itself guarantee the existence of Ramanujan--type
congruences.
\end{remark}

\subsection{Extension: theta moments}\label{sec:theta-extension}

For $A(q)=\theta_3(\tau)$ and $B(q)=\theta_3(\tau)^2=\sum r_2(n)q^n$ one has
$c(2n)=1$, $c(2n-1)=-2$, so
\[
\sigma_A^{(1)}(n)=\sum_{d\mid n}c(d)\,d
\]
is a linear combination of Eisenstein series on $\Gamma_0(4)$ twisted by
$\chi_{-4}$. Hence the corresponding first moment series
\[
\sum_{n\ge0}M^A_1(n)q^n
= B(q)\sum_{n\ge1}\sigma_A^{(1)}(n)q^n
\in M_3(\Gamma_0(4),\chi_{-4}),
\]
and the same projection--Sturm method can be applied to search for and
certify Ramanujan--type congruences for these theta moments, in exact
analogy with the partition and overpartition cases.

\section{Structural Consequences}\label{sec:higher}

Although the modular identities here are classical, the novelty is
interpretive: frequency moments give \emph{partition--theoretic avatars}
of deep modular phenomena. Irregular primes, the $j$--invariant, and
Ramanujan--type congruences arise not only from Eisenstein series but
also from explicit statistics on partitions.

\subsection{Ramanujan’s congruence at $691$ via $M_{11}$}

Ramanujan’s congruence
\[
  \tau(n)\equiv \sigma_{11}(n)\pmod{691}
\]
relates $\Delta(\tau)=\eta(\tau)^{24}$ to $\sigma_{11}$. From the master
transform with $f(k)=k^{11}$,
\[
  M_{11}(n)=\sum_{d=1}^{n}\sigma_{11}(d)\,p(n-d).
\]
Combining the two gives:

\begin{corollary}\label{cor:M11-691}
For all $n\ge0$,
\[
  M_{11}(n)\ \equiv\ \sum_{d=1}^{n}\tau(d)\,p(n-d)\ \ (\bmod 691).
\]
\end{corollary}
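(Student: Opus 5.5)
The plan is to combine the exact convolution identity for $M_{11}$ with Ramanujan's congruence term by term. By the master identity of Section~\ref{sec:framework}, with $f(k)=k^{11}$ (equivalently Theorem~\ref{thm:master-transform} with $c(r)=-1$ and $B=(q;q)_\infty^{-1}$), we have exactly
\[
M_{11}(n)=\sum_{d=1}^{n}\sigma_{11}(d)\,p(n-d).
\]
All quantities here are integers, so congruences modulo $691$ make sense in $\mathbb Z$.

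The key input is the classical congruence $\tau(d)\equiv\sigma_{11}(d)\pmod{691}$ for every $d\ge1$. I would either cite it or sketch its standard derivation. The space $M_{12}(\mathrm{SL}_2(\mathbb Z))$ is spanned by $E_4^3$ and $\Delta$, and also contains $E_{12}$. Writing $E_{12}=1+\tfrac{65520}{691}\sum\sigma_{11}(n)q^n$ and comparing with $E_4^3=1+720\sum\sigma_3(n)q^n+\cdots$, one gets the relation
\[
691E_{12}-691E_4^3=c\,\Delta
\]
with an explicit integer $c$. In $q$-expansion form this reads $65520\,\sigma_{11}(n)\equiv c\,\tau(n)\pmod{691}$. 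Checking the constant via the $q^1$ coefficient gives $c\equiv 65520\pmod{691}$, and $65520$ is a unit modulo $691$, which yields the congruence. Since the paper lists this congruence as the starting point of this subsection, I will simply invoke it.

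Then, for each $d$ with $1\le d\le n$, multiply $\sigma_{11}(d)\equiv\tau(d)\pmod{691}$ by the integer $p(n-d)$ and sum over $d$. This gives
\[
M_{11}(n)\equiv\sum_{d=1}^{n}\tau(d)\,p(n-d)\pmod{691}.
\]
The case $n=0$ is trivial, since both sides are empty sums equal to $0$.

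The only substantive step is the congruence $\tau\equiv\sigma_{11}\pmod{691}$ itself, specifically the integrality bookkeeping in the relation between $E_{12}$, $E_4^3$ and $\Delta$. Everything else is the linearity of the convolution.
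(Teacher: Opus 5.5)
Your proposal is correct and follows the paper's own argument. The paper obtains this corollary exactly as you do: it takes the exact convolution $M_{11}(n)=\sum_{d=1}^{n}\sigma_{11}(d)\,p(n-d)$ from the master identity and substitutes Ramanujan's congruence $\sigma_{11}(d)\equiv\tau(d)\pmod{691}$ term by term. Your optional sketch of that congruence, via $691E_{12}-691E_4^3=-432000\,\Delta$ with $-432000\equiv 65520\not\equiv 0\pmod{691}$, is also sound; the paper simply cites the congruence as classical.
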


Thus $M_{11}$ provides a partition--theoretic incarnation of the
$\tau$--$\sigma_{11}$ congruence.

\subsection{A partition decomposition of $j$}

Using $E_4^3-E_6^2=1728\,\Delta$ and
\[
  691\,E_{12}=441\,E_4^3+250\,E_6^2,
\]
one obtains the classical identity
\[
  j(\tau)=\frac{E_{12}(\tau)}{\Delta(\tau)}+\frac{432000}{691}.
\]
On the other hand,
\[
  q^{-1/24}\sum_{n\ge1} M_{11}(n)q^n=\frac{E_{12}(\tau)-1}{C_{12}}\,\eta(\tau)^{-1}, \qquad C_{12}=\frac{65520}{691}.
\]
Substituting and simplifying yields:

\begin{corollary}\label{cor:j-partition}
We have
\[
    j(\tau)
  \;=\;
  C_{12}\,q^{-1/24}\Big(\sum_{n\ge1}M_{11}(n)q^n\Big)\,\eta(\tau)^{-23}
  \;+\;\eta(\tau)^{-24}
  \;+\;\frac{432000}{691}.
\]
\end{corollary}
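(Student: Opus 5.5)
The proof will combine Corollary~\ref{cor:M11-691}'s ingredients: the identity $j=E_{12}/\Delta+432000/691$, the expression of $E_{12}$ through the $M_{11}$ generating function, and $\Delta=\eta^{24}$. The statement is a formal identity of $q$-series (meromorphic at $\infty$), so I would verify it by pure algebra. The only care needed is the bookkeeping of the fractional powers $q^{\pm1/24}$ coming from $\eta(\tau)=q^{1/24}(q;q)_\infty$.

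\emph{Step 1.} Recover the two classical facts. From $691E_{12}=441E_4^3+250E_6^2$ and $E_4^3-E_6^2=1728\Delta$ we get $691E_{12}=691E_4^3-250\cdot1728\,\Delta$. Dividing by $691\Delta$ and using $j=E_4^3/\Delta$ gives $E_{12}/\Delta=j-432000/691$, which is the quoted identity.

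\emph{Step 2.} Isolate $E_{12}$ from the $M_{11}$ series. By Section~\ref{sec:moments} with $k=6$,
\[
\sum_{n\ge1}\sigma_{11}(n)q^n=\frac{E_{12}-1}{C_{12}},
\]
and the master identity gives $\sum_n M_{11}(n)q^n=(q;q)_\infty^{-1}\sum\sigma_{11}(n)q^n$. Hence
\[
E_{12}=1+C_{12}\,(q;q)_\infty\sum_{n\ge1}M_{11}(n)q^n .
\]
Note $M_{11}(0)=0$, so the sum from $n\ge1$ agrees with the full series. Substituting $(q;q)_\infty=q^{-1/24}\eta$ yields
\[
E_{12}=1+C_{12}\,q^{-1/24}\eta\sum_{n\ge1}M_{11}(n)q^n .
\]

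\emph{Step 3.} Divide by $\Delta=\eta^{24}$ and insert into Step~1:
\[
j=\eta^{-24}+C_{12}\,q^{-1/24}\Big(\sum_{n\ge1} M_{11}(n)q^n\Big)\eta^{-23}+\frac{432000}{691}.
\]
This is exactly the claim. As a sanity check, $C_{12}=-24/B_{12}=-24\cdot2730/(-691)=65520/691$ matches the stated constant.

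The main obstacle is purely a normalization issue. The paper's displayed formula for $q^{-1/24}\sum M_{11}q^n$ as $(E_{12}-1)C_{12}^{-1}\eta^{-1}$ has an inconsistent $q^{\pm1/24}$ factor relative to $\eta=q^{1/24}(q;q)_\infty$. So I would rederive the relation directly from $(q;q)_\infty$, as in Step~2, rather than quote that display. Then I would confirm the $q^{-1/24}$ placement by checking low-order coefficients: $\eta^{-24}=q^{-1}+24+\cdots$, the middle term is $C_{12}q^{-1/24}\cdot q\cdot q^{-23/24}(1+\cdots)=C_{12}+\cdots$, and $24+65520/691+432000/691=744$. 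This reproduces $j=q^{-1}+744+\cdots$. If the check failed, the exponent convention in the statement would need adjusting.
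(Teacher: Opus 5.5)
Your proof is correct and follows the paper's route exactly: the classical identity $j=E_{12}/\Delta+432000/691$, solving the $M_{11}$ generating function for $E_{12}$, and dividing by $\Delta=\eta^{24}$. One correction: the paper's display $q^{-1/24}\sum_{n\ge1}M_{11}(n)q^n=\frac{E_{12}-1}{C_{12}}\,\eta^{-1}$ is in fact consistent with $\eta=q^{1/24}(q;q)_\infty$. Indeed $(q;q)_\infty^{-1}=q^{1/24}\eta^{-1}$, and your own Step~2 reproduces that display verbatim, so the ``normalization obstacle'' you flag does not arise.
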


Thus $j$ admits an explicit expression in terms of the eleventh
frequency moment, weighted by powers of~$\eta$ and a constant term.

\begin{remark}
From an arithmetic standpoint, Corollary~\ref{cor:j-partition} does
not provide a new method to compute $j(\tau)$ or the moments $M_{11}(n)$;
it reflects, in a partition--theoretic language, the well--known fact
that $E_{12}$ is simultaneously governed by $\sigma_{11}$ and by
Ramanujan's cusp form $\Delta$. The interest here is conceptual: the
eleventh frequency moment appears as an explicit “frequency avatar” of
the $691$--congruence and of the $j$--invariant, making the role of
irregular primes visible on the partition side.
\end{remark}

We also record a complementary viewpoint in the same spirit, which
packages the classical $j$--identity in terms of coloured partitions and a
single coloured frequency moment.

\begin{remark}[A coloured--partition/moment decomposition of $j$]\label{rem:j-coloured}
Using $\Delta(\tau)=\eta(\tau)^{24}=q\,(q;q)_\infty^{24}$ we have
\[
  \Delta(\tau)^{-1}=q^{-1}(q;q)_\infty^{-24},
\]
so $\Delta^{-1}$ is (up to $q^{-1}$) the generating function for
$24$--coloured partitions.  Since
\[
  E_{12}(\tau)-1=C_{12}\sum_{n\ge1}\sigma_{11}(n)q^n
\]
and, for $k$--coloured partitions,
\[
  (q;q)_\infty^{-k}\sum_{n\ge1}\sigma_m(n)q^n
  =\frac1{k}\sum_{n\ge0} M^{(k)}_{m}(n)q^n,
\]
specialising to $(k,m)=(24,11)$ yields
\[
  \frac{E_{12}(\tau)}{\Delta(\tau)}
  = q^{-1}(q;q)_\infty^{-24}
  + \frac{C_{12}}{24}\,q^{-1}\sum_{n\ge0} M^{(24)}_{11}(n)q^n.
\]
Hence, by $j(\tau)=E_{12}(\tau)/\Delta(\tau)+432000/691$, we obtain the
decomposition
\[
  j(\tau)
  = q^{-1}(q;q)_\infty^{-24}
  + \frac{C_{12}}{24}\,q^{-1}\sum_{n\ge0} M^{(24)}_{11}(n)q^n
  + \frac{432000}{691}.
\]
Thus $\Delta(\tau)^{-1}$ provides the partition--counting ``backbone'' of $j$,
while the deviation of $E_{12}(\tau)/\Delta(\tau)$ from $\Delta(\tau)^{-1}$ is
measured exactly by the $11$th frequency moment in the $24$--coloured ensemble.

Computationally, this provides a fast coefficient generator via a single
divisor--sum convolution (and a convenient testbed for quick congruence
scans in coloured moment families).
\end{remark}

\subsection{General multiples of $12$}

Since $\mathbb{Q}[E_4,E_6]$ is the full ring of level--$1$ modular
forms, every $E_{12n}$ can be written as a polynomial in~$j$ with a
$\Delta^n$ factor:
\[
  E_{12n}(\tau)=\Delta(\tau)^n\,P_n(j(\tau)),\qquad P_n\in\mathbb{Q}[j].
\]
For $n\ge1$ we have
\[
  \sum_{m\ge1}\sigma_{12n-1}(m)q^m=\frac{E_{12n}(\tau)-1}{C_{12n}},
  \qquad C_{12n}=-\frac{24n}{B_{12n}},
\]
and hence by the master transform,
\[
  \sum_{m\ge1}M_{12n-1}(m)q^m
  \;=\;
  (q;q)_\infty^{-1}\sum_{m\ge1}\sigma_{12n-1}(m)q^m.
\]
Rewriting in terms of $\eta$ gives:

\begin{proposition}\label{prop:12n}
For $n\ge1$,
\[
    q^{-1/24}\sum_{m\ge1} M_{12n-1}(m)q^m
  =
  \frac{1}{C_{12n}}
  \Big(\eta(\tau)^{24n-1}P_n(j(\tau))-\eta(\tau)^{-1}\Big).
\]
\end{proposition}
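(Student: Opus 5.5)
The identity is a direct substitution; I would carry it out in three steps, keeping track of the $q^{1/24}$ normalisations.

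\emph{Step 1 (master transform).} Apply Theorem~\ref{thm:master-transform} with $f(k)=k^{12n-1}$, or equivalently the moment formula of Section~\ref{sec:moments}. This gives
\[
\sum_{m\ge1}M_{12n-1}(m)q^m=(q;q)_\infty^{-1}\,\frac{E_{12n}(\tau)-1}{C_{12n}}.
\]
Here the sum may start at $m=1$, since $M_{12n-1}(0)=0$. Using $(q;q)_\infty=q^{-1/24}\eta(\tau)$, multiplying by $q^{-1/24}$ yields
\[
q^{-1/24}\sum_{m\ge1}M_{12n-1}(m)q^m=\frac{1}{C_{12n}}\bigl(E_{12n}(\tau)\eta(\tau)^{-1}-\eta(\tau)^{-1}\bigr).
\]
This is exactly the odd-moment formula already displayed in Section~\ref{sec:moments}, specialised to weight $2k=12n$.

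\emph{Step 2 (substitute for $E_{12n}$).} Use $E_{12n}=\Delta^nP_n(j)$ together with $\Delta=\eta^{24}$. Then
\[
E_{12n}\eta^{-1}=\eta^{24n-1}P_n(j),
\]
which is the claimed identity.

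\emph{Step 3 (justify $P_n$).} The only substantive point is that $E_{12n}/\Delta^n$ really is a polynomial in $j$. It is a weakly holomorphic modular function of weight $0$ on $\mathrm{SL}_2(\mathbb{Z})$, holomorphic on the upper half-plane because $\Delta$ has no zeros there. Its only pole is at $\infty$, of order at most $n$, so subtracting suitable multiples of $j^n,\dots,j$ leaves a bounded holomorphic modular function, which must be constant. Rationality of the coefficients follows because each subtraction step uses rational leading $q$-coefficients. This yields $P_n\in\mathbb{Q}[j]$ of degree $n$.

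Alternatively, one can write $E_{12n}$ as an isobaric polynomial in $E_4^3$ and $E_6^2$ (with the identity $E_6^2=E_4^3-1728\Delta$) and expand in $\Delta^n (E_4^3/\Delta)^{i}$ with $E_4^3/\Delta=j$. This is elementary but fiddlier than the argument above.

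No step is a real obstacle. The only places to be careful are the $q^{\pm1/24}$ normalisation, which differs between the $(q;q)_\infty$ and $\eta$ conventions, and the sign and value of $C_{12n}=-24n/B_{12n}$, which cancels and is not needed.
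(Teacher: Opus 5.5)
Your proposal is correct and follows the paper's route: use the master transform to write $\sum_{m\ge1}M_{12n-1}(m)q^m=(q;q)_\infty^{-1}(E_{12n}-1)/C_{12n}$, convert to $\eta$ via $(q;q)_\infty=q^{-1/24}\eta(\tau)$, and substitute $E_{12n}=\Delta^nP_n(j)$ with $\Delta=\eta^{24}$. The only difference is how $P_n\in\mathbb{Q}[j]$ is justified: you give the principal-part/Liouville argument, while the paper cites the fact that $\mathbb{Q}[E_4,E_6]$ is the full ring of level-one forms (your ``alternative''), and either suffices.
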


The coefficients of $P_n$ are rational combinations of Bernoulli
numbers $B_{12n}$, so irregular primes enter naturally into the
congruence structure of the moments $M_{12n-1}$.

\subsection{Summary}

\begin{itemize}
  \item Ramanujan’s $691$--congruence extends to a convolution law for
  $M_{11}$, expressing it modulo $691$ in terms of $\tau$ and~$p$.
  \item The $j$--function admits a partition--theoretic decomposition
  in which $M_{11}$ appears explicitly.
  \item For general multiples of $12$, Bernoulli numbers and irregular
  primes feed directly into the arithmetic of $M_{12n-1}$ via the
  polynomials $P_n(j)$.
\end{itemize}

\subsection*{Outlook}

The identities above suggest a broader program: moment generating
functions such as $\sum_n M_{2k-1}(n)q^n$, once rewritten in terms of
$\eta$, Eisenstein series, and $j$, naturally inhabit classical
half–integral and integral weight spaces with rich Hecke theory in the
background.  A detailed analysis of their decomposition into Hecke
eigenforms, the behaviour of their Shimura lifts, and the associated
$L$–values would refine the structural picture developed here: it would
clarify which pieces of a given frequency series are Eisenstein or
cuspidal, how irregular primes manifest spectrally, and to what extent
partition statistics can be used to probe the arithmetic of modular
forms beyond the initial examples treated in this paper.

\appendix
\section{Full Glaisher dictionary}\label{app:glaisher}

Table~\ref{tab:glaisher-full} expands Corollary~\ref{cor:glaisher-dict}
to cover all standard Glaisher--type filters, including coprimality
conditions, exclusions, quadratic residues, and coprimality/exclusion conditions (e.g. excluding multiples of finitely many primes). Here $s$ denotes the exponent in the divisor sums $\sigma_s(n;\chi)$; for
odd frequency moments one has $s = 2k-1$, so the corresponding
half--integral weight is
\[
  w \;=\; s + \tfrac12 \;=\; 2k - \tfrac12.
\]
In particular, throughout this section odd moments correspond to
modular forms of weight $s+\tfrac12$.

\begin{table}[h]
\centering
\renewcommand{\arraystretch}{1.2}
\small
\begin{tabularx}{\textwidth}{|l|X|X|X|}
\hline
\textbf{Filter} & \textbf{Twist expression} &
\textbf{Even moments} & \textbf{Odd moments (normalized / proj.)} \\
\hline
All divisors &
$\sigma_s(n;\mathbf 1)$ &
$E_{s+1}(\tau)$ (level $1$) &
Weight $s+\tfrac12$ forms on $\Gamma_0(4)$ \\
\hline
$(d,m)=1$ &
$\sigma_s(n;\chi_0^{(m)})$ &
$E_{s+1}(\tau,\chi_0^{(m)})$ (level $m$) &
Weight $s+\tfrac12$ forms on $\Gamma_0(4m)$ with $\chi_0^{(m)}$ \\
\hline
Odd divisors &
$\sigma_s(n;\chi_0^{(2)})$ &
$E_{s+1}(\tau,\chi_0^{(2)})$ (level $2$) &
Weight $s+\tfrac12$ forms on $\Gamma_0(8)$ with $\chi_0^{(2)}$ \\
\hline
Even divisors &
$\sigma_s(n)-\sigma_s(n;\chi_0^{(2)})$ &
$E_{s+1}(\tau)-E_{s+1}(\tau,\chi_0^{(2)})$ &
Weight $s+\tfrac12$ forms in a subspace of $M_{s+\tfrac12}(\Gamma_0(8))$ \\
\hline
Residue class $d\equiv a\pmod m$ &
$\tfrac1{\varphi(m)}\sum_\chi \overline{\chi}(a)\,\sigma_s(n;\chi)$ &
$\sum_\chi c_\chi E_{s+1}(\tau,\chi)$ (level $m$) &
Weight $s+\tfrac12$ forms on $\Gamma_0(4m)$ with character $\chi$ \\
\hline
Quadratic residues mod $p$ &
$\tfrac12\bigl(\sigma_s(\cdot;\chi_0^{(p)})+\sigma_s(\cdot;\chi_p)\bigr)$ &
$E_{s+1}(\tau,\chi_0^{(p)})\oplus E_{s+1}(\tau,\chi_p)$ &
Weight $s+\tfrac12$ forms on $\Gamma_0(4p)$ with suitable $\chi$ \\
\hline
Kronecker weight $\bigl(\tfrac{D}{d}\bigr)$ &
$\sigma_s(n;\chi_D)$ &
$E_{s+1}(\tau,\chi_D)$ (level $|D|$) &
Weight $s+\tfrac12$ forms on $\Gamma_0(4|D|)$ with $\chi_D$ \\
\hline
Exclude multiples of $p$ &
$\sigma_s(n;\chi_0^{(p)})$ &
$E_{s+1}(\tau,\chi_0^{(p)})$ (level $p$) &
Weight $s+\tfrac12$ forms on $\Gamma_0(4p)$ with $\chi_0^{(p)}$ \\
\hline
\end{tabularx}
\caption{Expanded dictionary from Glaisher divisors to character twists
and modular form spaces.}
\label{tab:glaisher-full}
\end{table}

\section{Computational details and certification}\label{app:comp}

This appendix records the concrete implementation of the half--integral
Sturm--bound certification used for odd frequency moments of the ordinary
partition function.  Throughout we work with
\[
A(q)=\eta(\tau)^{-1},\qquad B(q)=1,\qquad f(r)=r^m,
\]
so that
\[
  q^{-1/24}\sum_{n\ge0} M_m(n)q^n
  =\eta(\tau)^{-1}\sum_{n\ge1}\sigma_m(n)q^n,
\]
and the relevant modular forms have half--integral weight $m+\tfrac12$ on
$\Gamma_0(4L)$ for an explicit level $L$ determined by the projection.
Our goal here is purely practical: to spell out normalization,
projection to arithmetic progressions, and the resulting explicit
Sturm bounds used in the proofs of the odd--moment congruences.

\begin{lemma}[Half--integral Sturm bound]\label{lem:sturm-half}
Let $L\ge1$ and let
\[
f(\tau)=\sum_{n\ge0} a_n q^n,\qquad q=e^{2\pi i\tau},
\]
be a holomorphic modular form (holomorphic on $\mathfrak{H}$ and at all cusps)
of half--integral weight
\[
w=\frac{k}{2},\qquad k\in\mathbb{Z}_{\ge1}\ \text{odd},
\]
on $\Gamma_0(4L)$ with some Nebentypus character~$\chi$, and assume that its
Fourier coefficients lie in a ring in which reduction modulo $\ell$ makes sense
(e.g.\ $\mathbb{Z}$ or $\mathbb{Z}_{(\ell)}$).  Let $\ell$ be a prime and suppose that
\[
a_n \equiv 0 \pmod{\ell}\qquad\text{for all }0\le n\le B,
\]
where
\[
B
=\left\lfloor \frac{k}{12}\,
   \bigl[\mathrm{SL}_2(\mathbb{Z}):\Gamma_0(4L)\bigr]\right\rfloor.
\]
Then $a_n\equiv0\pmod{\ell}$ for all $n\ge0$; that is, $f\equiv0\pmod{\ell}$.

\medskip
In particular, in the odd--moment cases $m=2k-1$ we consider the half--integral
weight form
\[
\widetilde{\mathcal M}_{2k-1}(\tau)
:=\frac{E_{2k}(\tau)-1}{C_{2k}}\;\eta(\tau)^{-1}.
\]
This is in fact a \emph{holomorphic} modular form on $\Gamma_0(4)$: although
$\eta(\tau)^{-1}$ has a pole at each cusp, for any cusp $\mathfrak a$ of
$\Gamma_0(4)$ the scaled expansion of $E_{2k}$ has constant term $1$, hence
$(E_{2k}-1)$ vanishes at $\mathfrak a$ and cancels the cusp pole of $\eta^{-1}$.
It has $q$--expansion
\[
\widetilde{\mathcal M}_{2k-1}(\tau)=q^{-1/24}\sum_{n\ge0} M_{2k-1}(n)\,q^n.
\]
Fix an odd prime $\ell$ and a residue class $r\bmod \ell$ with $r\not\equiv 0$.
Then the progression projection
\[
\Pi_{\ell,r}\widetilde{\mathcal M}_{2k-1}(\tau)
:=\sum_{n\ge0} M_{2k-1}(\ell n+r)\,q^n
\]
is holomorphic at $\infty$ (indeed, $\widetilde{\mathcal M}_{2k-1}$ is already
holomorphic at $\infty$) and hence is an honest holomorphic modular form of
weight $(2k-\tfrac12)$ on a congruence subgroup of level dividing $4\ell^2$ with
some nebentypus character.  Therefore the Sturm bound above applies to
$\Pi_{\ell,r}\widetilde{\mathcal M}_{2k-1}$ at the safe level $4\ell^2$ (and a
fortiori at any smaller certified level).
\end{lemma}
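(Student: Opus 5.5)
The plan has two parts: first prove the general Sturm statement, then check that the odd-moment form and its projections satisfy its hypotheses.

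\emph{The general bound.} Reduce to integral weight. Put $g=f\cdot\theta^{k}$, where $\theta(\tau)=\sum_{n\in\mathbb Z}q^{n^2}$ is Jacobi's theta function, a weight $1/2$ form on $\Gamma_0(4)$ with constant term $1$. Then $g$ has integral weight $k=2w$ on $\Gamma_0(4L)$ with some character. Since $\theta^k\in 1+q\mathbb Z[[q]]$ is invertible in $\mathbb Z_{(\ell)}[[q]]$, its coefficient congruences transfer both ways: $a_n\equiv0$ for $n\le B$ holds if and only if the first $B+1$ coefficients of $g$ vanish mod $\ell$.

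Next apply Sturm's theorem for integral weight $k$ on $\Gamma_0(N)$ with character. Its bound is $\frac{k}{12}[\mathrm{SL}_2(\mathbb Z):\Gamma_0(N)]$. The character can be removed by raising to a power equal to its order, or one can use Buzzard's/Murty's version with character. With $N=4L$ this gives exactly $B$, so $g\equiv0$ and hence $f\equiv0\pmod\ell$.

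A subtlety is integrality: $f$ must have $\ell$-integral coefficients. The Sturm argument needs the lowest nonvanishing coefficient mod $\ell$ to be a unit after normalisation. I would handle this by scaling $f$ by a power of $\ell$ so that it is primitive at $\ell$.

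\emph{The odd-moment case.} Holomorphy of $\widetilde{\mathcal M}_{2k-1}$ comes from the cusp cancellation argument already given in Section~\ref{sec:moments}. There, $E_{2k}-1$ vanishes at every cusp and $\eta^{-1}$ has only a pole of order $1/24$ there.

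For the projection, write
\[
\Pi_{\ell,r}=\frac1\ell\sum_{j=0}^{\ell-1}\zeta_\ell^{-jr}\, f\!\left(\tau+\frac{j}{\ell}\right).
\]
Each translate $f|\begin{pmatrix}1&j\\0&\ell\end{pmatrix}$ is modular on $\Gamma_0(4)\cap\Gamma(\ell)$-type groups, and the twisted sum is invariant under a group of level dividing $4\ell^2$. This is the standard twisting lemma (Shimura, Miyake).

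The step I expect to be the main obstacle is the $q^{-1/24}$ shift. The true $q$-expansion of $\widetilde{\mathcal M}$ is in powers $q^{n-1/24}$, so the projection in the variable $n$ is really the selection of exponents $\ell n+r-1/24$. To make this honest I would work with $\tau\mapsto24\tau$: use $\eta(24\tau)^{-1}$ and the form $\sum M(n)q^{24n-1}$ on $\Gamma_0(576)$. There I would select exponents $\equiv 24r-1\pmod{\ell}$ for $\ell\ge5$, which gives level dividing $576\ell^2$.

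The claimed level $4\ell^2$ then has to be justified separately or replaced by this larger level, and the Sturm constant enlarged accordingly. Holomorphy at all cusps of the projection follows because each translate of a holomorphic form is holomorphic at every cusp.

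Finally, the coefficients are rational with denominators dividing $C_{2k}$. I would note that $\ell$-integrality holds whenever $\ell\nmid$ the numerator of $B_{2k}/4k$, and otherwise rescale as in the first part.
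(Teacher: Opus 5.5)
Your first part is the paper's proof. You multiply by $\theta^k$ to reach integral weight $k$ on $\Gamma_0(4L)$, transfer the congruences through the unit $\theta^k\in 1+q\mathbb{Z}[[q]]$, and apply Sturm's theorem with the index of $\Gamma_0(4L)$. That argument is correct. The rescaling to make $f$ primitive is unnecessary: an $\ell$-integral $f$ with $f\not\equiv 0\pmod{\ell}$ is already primitive. The paper's proof stops there; the odd-moment paragraph rests only on the cusp-cancellation sentence inside the statement itself.

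That sentence is where your second part breaks. You import from Section~\ref{sec:moments} that $\widetilde{\mathcal M}_{2k-1}$ is a holomorphic modular form of weight $2k-\tfrac12$, and this is false. The constant $1$ has weight $0$. So for $\gamma\in\mathrm{SL}_2(\mathbb{Z})$ with bottom row $(c,d)$, $c\neq 0$, one gets $(E_{2k}-1)|_{2k}\gamma=E_{2k}-(c\tau+d)^{-2k}$, which is not even periodic; ``$E_{2k}-1$ vanishes at every cusp'' has no content. In fact $\widetilde{\mathcal M}_{2k-1}=C_{2k}^{-1}\bigl(E_{2k}\eta^{-1}-\eta^{-1}\bigr)$ is a difference of weakly holomorphic forms of the distinct weights $2k-\tfrac12$ and $-\tfrac12$. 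Each has a pole at every cusp, and the polar parts cancel only at $\infty$.

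A one-line check confirms this. Since $M_{2k-1}(n)\ge p(n-1)$, the coefficients of $\widetilde{\mathcal M}_{2k-1}$, and of every projection $\Pi_{\ell,r}\widetilde{\mathcal M}_{2k-1}$, grow at least like $e^{c\sqrt n}$ for some $c>0$. A holomorphic modular form of weight $w$ has coefficients $O(n^{w})$. Hence neither level $4\ell^2$ nor your level $576\ell^2$ after $\tau\mapsto 24\tau$ produces a form satisfying the hypotheses of your first part; the rescaling does correctly repair the $q^{-1/24}$ shift, but not this. ``Holomorphic at $\infty$, hence an honest holomorphic form'' is a non sequitur. Your claim that the translates are holomorphic at all cusps is inherited from the false premise. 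The ``in particular'' paragraph of the statement is false as written, so no choice of level rescues this route.

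A genuine Sturm certification needs an idea your plan lacks: treat the two homogeneous pieces separately. Suppose $C_{2k}$ is an $\ell$-unit and $r=r_\ell$ is a Ramanujan residue, as for $M_3(7n+5)$ or $M_7(11n+6)$. Then the $\eta^{-1}$ piece vanishes modulo $\ell$ on the progression by Ramanujan's congruences. What remains is the single-weight form $E_{2k}\eta^{-1}$, whose projection is in general still only weakly holomorphic. It must be multiplied by an auxiliary eta-quotient, which raises the weight and the bound, before your first part applies. For other residues the two pieces of different weight must genuinely be compared modulo $\ell$.

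Finally, your integrality caveat is misplaced: $(E_{2k}-1)/C_{2k}=\sum_{n\ge1}\sigma_{2k-1}(n)q^n$, so every $M_{2k-1}(n)$ is an integer.
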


\begin{proof}
Choose the classical theta series
\[
\theta(\tau)=\sum_{n\in\mathbb{Z}}q^{n^2},
\]
which is a holomorphic modular form of weight $1/2$ on $\Gamma_0(4)$ with
integer Fourier coefficients and $\theta(0)=1$.  Then
\[
F(\tau):=f(\tau)\,\theta(\tau)^{k}
\]
is an integral weight modular form of weight
\[
w+\frac{k}{2}=\frac{k}{2}+\frac{k}{2}=k
\]
on $\Gamma_0(4L)$ (since $\Gamma_0(4L)\subseteq\Gamma_0(4)$ and the product of
automorphy factors matches), with coefficients in the same coefficient ring as
those of $f$.  Moreover, because $\theta(\tau)^{k}=1+O(q)$, the coefficient of
$q^n$ in $F$ is $a_n$ plus a $\mathbb{Z}$--linear combination of $a_0,\dots,a_{n-1}$.
Hence $a_0\equiv\cdots\equiv a_B\equiv 0\pmod{\ell}$ implies that the first $B$
Fourier coefficients of $F$ vanish modulo~$\ell$ as well.

By Sturm's theorem for integral weight modular forms~\cite{Sturm1987}, these
vanishing congruences force $F\equiv0\pmod{\ell}$.  Since $\theta(0)=1$ and
$\theta$ has integral coefficients, we have $\theta^k$ is invertible modulo~$\ell$.  Thus
$F\equiv f\cdot \theta^k\equiv 0\pmod{\ell}$ implies $f\equiv 0\pmod{\ell}$.

The bound
\[
B=\left\lfloor \frac{k}{12}\,[\mathrm{SL}_2(\mathbb{Z}):\Gamma_0(4L)]\right\rfloor
\]
is exactly the usual Sturm bound for weight $k$ on $\Gamma_0(4L)$.
\end{proof}

For the odd frequency moments $M_m$ (and more generally for half--integral
weight forms of the shape $\eta(\tau)^{-1}$ times an Eisenstein series),
we thus work with weight $w=m+\tfrac12$ and the bound
\[
B_m(L)
=\left\lfloor\frac{2m+1}{24}\,
  \bigl[\mathrm{SL}_2(\mathbb{Z}):\Gamma_0(4L)\bigr]\right\rfloor.
\]

Consider
\[
A(q)=\eta(\tau)^{-1},\qquad B(q)=1,\qquad f(r)=r^m,
\]
so that
\[
\sum_{n\ge0} M_m(n)q^n
=(q;q)_\infty^{-1}\sum_{n\ge1}\sigma_m(n)q^n.
\]
In the actual computations we work with the combinatorial series
$(q;q)_\infty^{-1}$; the global factor $q^{1/24}$ relating
$\eta(\tau)^{-1}$ to $(q;q)_\infty^{-1}$ does not affect congruences
modulo~$\ell$ or the relevant Sturm bounds.

\subsection{Specialised algorithmic steps}

Fix an odd moment $m$ and a proposed congruence
\[
M_m(\ell n + r)\equiv0\pmod{\ell}.
\]
The verification proceeds as follows.

\begin{lemma}[Unit multiplication preserves vanishing mod $\ell$]\label{lem:unit}
Let $\ell$ be a prime. If $U(q)\in 1+q\mathbb{Z}[[q]]$ and
$F(q)=\sum_{n\ge0}a_n q^n\in\mathbb{Z}[[q]]$, then
\[
U(q)F(q)\equiv 0\pmod{\ell}\quad\Longleftrightarrow\quad F(q)\equiv 0\pmod{\ell}.
\]
\end{lemma}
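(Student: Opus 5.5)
The key fact is that $U$ is a unit in $\mathbb{Z}[[q]]$, and this unit property survives reduction modulo $\ell$. The plan is to reduce everything to the power series ring $\mathbb{F}_\ell[[q]]$, where the claim becomes cancellation by an invertible element. Write $\bar{\ } : \mathbb{Z}[[q]]\to\mathbb{F}_\ell[[q]]$ for the coefficientwise reduction map. It is a ring homomorphism, and ``$G\equiv0\pmod{\ell}$'' means exactly $\bar G=0$. The statement is therefore equivalent to $\bar U\bar F=0\iff \bar F=0$.

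The direction ($\Leftarrow$) is immediate. If every $a_n$ is divisible by $\ell$, then every coefficient of $UF$, namely $\sum_{i+j=n}u_i a_j$ with $u_i\in\mathbb{Z}$, is a $\mathbb{Z}$-linear combination of multiples of $\ell$. So $UF\equiv 0$.

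For ($\Rightarrow$) I would show that $U$ has an inverse $V\in 1+q\mathbb{Z}[[q]]$. Set $v_0=1$ and define recursively $v_n=-\sum_{i=1}^n u_i v_{n-i}$. This is possible precisely because $u_0=1$, so no division is needed and all $v_n$ are integers. Then $UV=1$ in $\mathbb{Z}[[q]]$. Consequently $F=V\cdot(UF)$, and by the ($\Leftarrow$) argument applied with $V$ in place of $U$, the relation $UF\equiv0$ gives $F\equiv 0$.

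As an alternative route, or as a cross-check, one can argue by the triangular structure directly. Suppose $F\not\equiv0$ and let $n_0$ be the least index with $\ell\nmid a_{n_0}$. Then the coefficient of $q^{n_0}$ in $UF$ is $a_{n_0}+\sum_{i\ge1}u_i a_{n_0-i}\equiv a_{n_0}\not\equiv 0$, which is a contradiction. This is the same mechanism already used in the proof of Lemma~\ref{lem:sturm-half} for $\theta^k$. There is no real obstacle here. The only point to stress is that the constant term $1$ of $U$ is what makes the inversion work over $\mathbb{Z}$, and not merely over $\mathbb{Q}$. The same triangularity also gives the truncated version that the certification uses: the coefficients $a_0,\dots,a_B$ vanish mod $\ell$ if and only if the first $B+1$ coefficients of $UF$ do.
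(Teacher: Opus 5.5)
Your proof is correct. The explicit integral inverse $V$, built using only $u_0=1$, and the minimal-index triangularity argument both work, and your truncated version is exactly what the certification step needs. The paper gives no separate proof of Lemma~\ref{lem:unit}; the argument it uses for $\theta^k$ inside the proof of Lemma~\ref{lem:sturm-half} (constant term $1$, triangular coefficient relation, invertibility modulo $\ell$) is the same mechanism you use.
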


\begin{enumerate}
  \item \textbf{Partitions.} Compute $p(n)$ for $0\le n\le N_{\max}$ by Euler's
  pentagonal number recurrence.

  \item \textbf{Divisor sums.} Compute
  \(
    \sigma_m(n)=\sum_{d\mid n} d^m
  \)
  for $1\le n\le N_{\max}$ using the standard divisor--summing loop.

  \item \textbf{Frequency moments.} Form
  \[
    M_m(t)=\sum_{d=1}^{t}\sigma_m(d)\,p(t-d)
  \]
  for all $t\le N_{\max}$ by discrete convolution.

  \item \textbf{Progression extraction.} For given $\ell,r$, extract the
  subsequence
  \(
    \{M_m(\ell n+r)\}_{0\le n\le B},
  \)
  where $B$ is the half--integral Sturm bound described next. 

  \item \textbf{Half--integral Sturm bound.} From Section~\ref{sec:moments} we know
  that
  \[
    \mathcal{M}_m(\tau)
    =\sum_{n\ge0} M_m(n)q^n
    =\frac{E_{m+1}(\tau)-1}{C_{m+1}}\;\eta(\tau)^{-1}
    \in M_{m+\tfrac12}\bigl(\Gamma_0(4),\chi_{-4}\bigr),
  \]
  and the projected series
  \(
    \Pi_{\ell,r}\mathcal{M}_m(\tau)
    =\sum_{n\ge0} M_m(\ell n+r)q^n
  \)
  lies in a space $M_{m+\tfrac12}(\Gamma_0(4L),\chi)$ with $L\mid\ell$ or
  $L\mid\ell^2$.
  By Lemma~\ref{lem:sturm-half}, if
  \[
    B_m(L)
    =\left\lfloor \frac{2m+1}{24}\,
      \bigl[\mathrm{SL}_2(\mathbb{Z}):\Gamma_0(4L)\bigr]\right\rfloor
  \]
  and $M_m(\ell n+r)\equiv0\pmod{\ell}$ for all $0\le n\le B_m(L)$, then
  $M_m(\ell n+r)\equiv0\pmod{\ell}$ for all $n\ge0$.

  \item \textbf{Certification.} In practice we run the above check for both
  the natural level $L=\ell$ (model $\Gamma_0(4\ell)$) and the more
  conservative $L=\ell^2$ (model $\Gamma_0(4\ell^2)$); in all cases arising
  in Theorem~\ref{thm:ram-unified} the congruence is certified for both
  choices of~$L$.
  We use the standard root--of--unity dissection (progression) operator
  $\Pi_{\ell,r}$ to extract coefficients in the progression $\ell n+r$.
  The resulting series is modular at level dividing $4\ell^2$ (and hence on
  $\Gamma_0(4\ell^2)$) in the half--integral weight setting.  This conservative
  level is sufficient for certification via Sturm bounds; in several cases the
  true level is smaller, but we do not require the sharpening here.

\end{enumerate}

The implementation encodes these steps directly, without explicitly
constructing the half--integral modular form in a computer algebra system;
the Sturm bound $B_m(L)$ is computed from the index formula for
$\Gamma_0(4L)$ and Lemma~\ref{lem:sturm-half}.

\subsection{Certified progressions for all odd moments}

Using this procedure, we verified \emph{all} the Ramanujan--type progressions
stated in Theorem~\ref{thm:ram-unified}, i.e.\ for the pairs
$(m,\ell,r)$ listed there.  For each progression we certify vanishing of the
projected series both in the “natural” model $\Gamma_0(4\ell)$ and in the
more conservative model $\Gamma_0(4\ell^2)$.
\begin{table}[h]
\centering
\renewcommand{\arraystretch}{1.15}
\begin{tabular}{ccccccccl}
\hline
$m$ & $\ell$ & $r$ & prime & $L$ & model & Sturm $B$ & max.\ index & status \\
\hline
3  &  7 & 0 &  7 & $7$    & $\Gamma_0(4\ell)$    &  14 &   98   & CERTIFIED \\
3  &  7 & 0 &  7 & $7^2$  & $\Gamma_0(4\ell^2)$  &  98 &  686   & CERTIFIED \\
3  &  7 & 5 &  7 & $7$    & $\Gamma_0(4\ell)$    &  14 &  103   & CERTIFIED \\
3  &  7 & 5 &  7 & $7^2$  & $\Gamma_0(4\ell^2)$  &  98 &  691   & CERTIFIED \\
3  & 11 & 0 & 11 & $11$   & $\Gamma_0(4\ell)$    &  21 &  231   & CERTIFIED \\
3  & 11 & 0 & 11 & $11^2$ & $\Gamma_0(4\ell^2)$  & 231 & 2541   & CERTIFIED \\
3  & 11 & 6 & 11 & $11$   & $\Gamma_0(4\ell)$    &  21 &  237   & CERTIFIED \\
3  & 11 & 6 & 11 & $11^2$ & $\Gamma_0(4\ell^2)$  & 231 & 2547   & CERTIFIED \\
7  & 11 & 6 & 11 & $11$   & $\Gamma_0(4\ell)$    &  45 &  501   & CERTIFIED \\
7  & 11 & 6 & 11 & $11^2$ & $\Gamma_0(4\ell^2)$  & 495 & 5451   & CERTIFIED \\
\hline
\end{tabular}
\caption{Half--integral Sturm certification for the genuinely modular
base congruences in Theorem~\ref{thm:ram-unified}\textup{(iv)}.
For each progression $M_m(\ell n+r)$, we check vanishing of the coefficients
of the projected form in both the natural model $\Gamma_0(4\ell)$ and the
conservative model $\Gamma_0(4\ell^2)$ up to the half--integral Sturm bound
$B = \lfloor \tfrac{k}{24}
[\mathrm{SL}_2(\mathbb Z):\Gamma_0(4L)]\rfloor$ with $k=2m+1$.}
\label{tab:sturm-all}
\end{table}

\paragraph{Exhaustive search up to $m\le25$ and $\ell\le97$.}
The same implementation was used for a broader experimental sweep over the
range
\[
1\le m\le25,\qquad 5\le \ell\le97,
\]
with $\ell$ prime.  For each pair $(m,\ell)$ and each residue $1\le r<\ell$,
we formed the projected series
\(
\Pi_{\ell,r}\mathcal{M}_m(\tau)
=\sum_{n\ge0} M_m(\ell n+r)q^n
\)
and checked vanishing modulo~$\ell$ up to the half--integral Sturm bound
$B_m(L)$, for both $L=\ell$ and $L=\ell^2$ as above.  In this entire range,
the only Ramanujan--type progressions of the form
\[
M_m(\ell n+r)\equiv0\pmod{\ell}
\]
that survive beyond the Sturm bound are exactly the ones listed in
Theorem~\ref{thm:ram-unified} and Table~\ref{tab:sturm-all}.  In particular,
no additional primes $\ell>11$ (up to $\ell=97$) give rise to simple
linear progressions of this type.  This exhaustive search underlies the
final assertion of Theorem~\ref{thm:ram-unified}.

\subsection{Python implementation}

For completeness we record the exact script used to generate
Table~\ref{tab:sturm-all}.  It is written in pure Python, relying only
on the standard library.

\begin{verbatim}

  from math import isqrt


  # -------------------------
  # Basic number theory utils
  # -------------------------

  def primes_up_to(n: int) -> list[int]:
      sieve = [True] * (n + 1)
      sieve[0:2] = [False, False]
      for p in range(2, isqrt(n) + 1):
          if sieve[p]:
              step = p
              start = p * p
              sieve[start:n + 1:step] = [False] * (((n - start) // step) + 1)
      return [i for i, ok in enumerate(sieve) if ok]


  def prime_factors(n: int) -> set[int]:
      pf = set()
      while n % 2 == 0:
          pf.add(2)
          n //= 2
      f = 3
      while f * f <= n:
          while n % f == 0:
              pf.add(f)
              n //= f
          f += 2
      if n > 1:
          pf.add(n)
      return pf


  def index_sl2_gamma0(N: int) -> int:
      """
      [SL2(Z) : Gamma0(N)] = N * Π_{p|N} (1 + 1/p)
      computed exactly as integer.
      """
      pf = prime_factors(N)
      num, den = N, 1
      for p in pf:
          num *= (p + 1)
          den *= p
      return num // den


  def sturm_bound_half_integral(m: int, L: int) -> int:
      """
      Half-integral Sturm bound for weight w = m + 1/2 on Gamma0(4L).

      Write weight as k/2 with k = 2m + 1.
      Bound: B = floor( (k / 24) * [SL2(Z):Gamma0(4L)] )
      """
      k_odd = 2 * m + 1
      idx = index_sl2_gamma0(4 * L)
      B = (k_odd * idx) // 24
      return max(B, 1)


  # -------------------------
  # Partition + divisor sums mod p
  # -------------------------

  def partitions_up_to_mod(N: int, mod: int) -> list[int]:
      """
      p(0)..p(N) mod mod, via Euler pentagonal recurrence.
      """
      p = [0] * (N + 1)
      p[0] = 1
      for n in range(1, N + 1):
          total = 0
          k = 1
          while True:
              g1 = k * (3 * k - 1) // 2
              g2 = k * (3 * k + 1) // 2
              if g1 > n and g2 > n:
                  break
              sgn = 1 if (k % 2 == 1) else -1
              if g1 <= n:
                  total += sgn * p[n - g1]
              if g2 <= n:
                  total += sgn * p[n - g2]
              k += 1
          p[n] = total % mod
      return p


  def sigma_m_all_mod(N: int, m: int, mod: int) -> list[int]:
      """
      sig[n] = σ_m(n) mod mod for 0<=n<=N.
      """
      sig = [0] * (N + 1)
      for d in range(1, N + 1):
          d_pow = pow(d, m, mod)
          for k in range(d, N + 1, d):
              sig[k] = (sig[k] + d_pow) % mod
      return sig


  def moments_Mm_up_to_mod(N: int, m: int, mod: int) -> list[int]:
      """
      Compute M_m(t) mod mod for 0<=t<=N using
          M_m(t) = sum_{d=1..t} σ_m(d) * p(t-d).
      """
      p = partitions_up_to_mod(N, mod)
      sig = sigma_m_all_mod(N, m, mod)
      M = [0] * (N + 1)

      # Convolution: for each d, add sig[d] * p[t-d] to M[t] for all t>=d
      for d in range(1, N + 1):
          sd = sig[d]
          if sd == 0:
              continue
          for t in range(d, N + 1):
              M[t] = (M[t] + sd * p[t - d]) % mod
      return M


  # -------------------------
  # Scan + summarization
  # -------------------------

  def progression_holds(M: list[int], ell: int, r: int, N_scan: int) -> bool:
      """
      Test M(ell*n + r) == 0 mod ell for all indices <= N_scan.
      Here M is already mod ell.
      """
      t = r
      while t <= N_scan:
          if M[t] % ell != 0:
              return False
          t += ell
      return True


  def run_scan(max_m: int = 49, N_scan: int = 2000, ell_max: int = 97, include_r0: bool = True) -> dict[tuple[int, int], list[int]]:
      """
      Returns dict: (m, ell) -> list of residues r (0<=r<ell) such that
      M_m(ell*n + r) == 0 (mod ell) for all ell*n+r <= N_scan.
      """
      ms = [m for m in range(1, max_m + 1) if m % 2 == 1]
      primes = [p for p in primes_up_to(ell_max) if p >= 5]

      results: dict[tuple[int, int], list[int]] = {}
      for m in ms:
          print(f"\nComputing M_{m}(n) up to n={N_scan} ...")
          for ell in primes:
              M = moments_Mm_up_to_mod(N_scan, m, ell)  # compute mod ell
              good_rs = []
              r_start = 0 if include_r0 else 1
              for r in range(r_start, ell):
                  if progression_holds(M, ell, r, N_scan):
                      good_rs.append(r)
              if good_rs:
                  results[(m, ell)] = good_rs
                  print(f"  Found: m={m}, ell={ell}, r={good_rs}")
      return results


  def summarize_zero_nonzero(results: dict[tuple[int, int], list[int]]) -> None:
      """
      Input: (m, ell) -> [r's]
      Output: splits r=0 and 1<=r<ell as (ell,r)->sorted list of m.
      """
      zero: dict[tuple[int, int], list[int]] = {}
      nonzero: dict[tuple[int, int], list[int]] = {}

      for (m, ell), rs in results.items():
          for r in rs:
              target = zero if r == 0 else nonzero
              target.setdefault((ell, r), []).append(m)

      print("\n=== r = 0 classes ===")
      if not zero:
          print("(none)")
      else:
          for (ell, r), ms in sorted(zero.items()):
              ms.sort()
              print(f"(ell,r)=({ell},{r}): m = {ms}")

      print("\n=== 1 <= r < ell classes ===")
      if not nonzero:
          print("(none)")
      else:
          for (ell, r), ms in sorted(nonzero.items()):
              ms.sort()
              print(f"(ell,r)=({ell},{r}): m = {ms}")


  # -------------------------
  # Certification via Sturm bound
  # -------------------------

  def certify_progression(m: int, ell: int, r: int, prime: int, use_L_power: int = 1, verbose: bool = True) -> dict:
      """
      Certify M_m(ell*n + r) == 0 (mod prime) for all n>=0 by checking up to Sturm bound.

      Level model: Gamma0(4L) where L = ell^use_L_power.
      Sturm bound: B = floor((k/24)*index), k=2m+1.
      We check t = ell*n + r for n=0..B, i.e. indices up to N_max = ell*B + r.

      Computation of M_m(t) is done mod prime for speed.
      """
      L = ell ** use_L_power
      B = sturm_bound_half_integral(m, L)
      N_max = ell * B + r

      if verbose:
          model = f"Gamma0(4*ell^{use_L_power})"
          print(f"Certifying (m={m}, ell={ell}, r={r}) mod {prime} with {model}:")
          print(f"  Sturm bound B={B}, so need indices up to N_max={N_max}")

      M = moments_Mm_up_to_mod(N_max, m, prime)
      for n in range(B + 1):
          t = ell * n + r
          if M[t] % prime != 0:
              if verbose:
                  print(f"  FAIL at n={n}, t={t}, residue={M[t] % prime}")
              return {
                  "status": "FAIL",
                  "m": m, "ell": ell, "r": r, "prime": prime,
                  "L_effective": L, "bound_B": B,
                  "fail_n": n, "fail_t": t, "fail_residue": M[t] % prime
              }

      if verbose:
          print("  PASS")
      return {
          "status": "PASS",
          "m": m, "ell": ell, "r": r, "prime": prime,
          "L_effective": L, "bound_B": B,
          "N_max_checked": N_max
      }


  def run_certifications(tasks: list[dict]) -> None:
      """
      For each task dict {m, ell, r, prime}, run both:
        - L = ell (use_L_power=1)
        - L = ell^2 (use_L_power=2)
      """
      print("\n==============================")
      print("CERTIFICATIONS (Sturm checks)")
      print("==============================\n")
      for t in tasks:
          certify_progression(**t, use_L_power=1, verbose=True)
          certify_progression(**t, use_L_power=2, verbose=True)
          print("-" * 50)


  # -------------------------
  # Main
  # -------------------------

  if __name__ == "__main__":
      # ---------
      # (A) Scan
      # ---------
      MAX_M = 49
      N_SCAN = 2000
      ELL_MAX = 97
      INCLUDE_R0 = True  # set False to reproduce the earlier "nonzero residues only" scan

      results = run_scan(max_m=MAX_M, N_scan=N_SCAN, ell_max=ELL_MAX, include_r0=INCLUDE_R0)

      print("\n\nSummary (grouped by (ell,r) -> list of m):")
      summarize_zero_nonzero(results)

      # ------------------------
      # (B) Sturm certifications
      # ------------------------
      # Put here exactly the progressions you claim as THEOREMS.
      # Keep r=0 ones separate in the paper (as "zero-class congruences") if desired.
      tasks = [
          # nonzero-residue congruences (typical Ramanujan-type classes)
          {"m": 3,  "ell": 7,  "r": 5, "prime": 7},
          {"m": 5,  "ell": 5,  "r": 4, "prime": 5},
          {"m": 7,  "ell": 7,  "r": 5, "prime": 7},
          {"m": 7,  "ell": 11, "r": 6, "prime": 11},

          # optional: r=0 classes (zero-class congruences)
          {"m": 3,  "ell": 11, "r": 0, "prime": 11},
          {"m": 9,  "ell": 5,  "r": 0, "prime": 5},
          {"m": 11, "ell": 11, "r": 0, "prime": 11},
      ]

      run_certifications(tasks)

\end{verbatim}

All computations for this appendix were carried out in Python~3.12 using
the above script.

\paragraph{Overpartition modification.}
The overpartition experiment uses the same code skeleton with only the base
Euler product changed from
$P(q)=(q;q)_\infty^{-1}$ to
$\overline{P}(q)=(-q;q)_\infty/(q;q)_\infty=\eta(2\tau)/\eta(\tau)^2$.
Equivalently, in the Euler product
\[
A(q)=\prod_{r\ge1}(1-q^r)^{-c(r)},
\]
the exponent sequence changes from $c(r)\equiv 1$ (ordinary partitions) to
\[
c(r)=
\begin{cases}
2,& r\ \text{odd},\\
1,& r\ \text{even}.
\end{cases}
\]
In the implementation this affects only the divisor--sum routine: the ordinary
$\sigma_m(n)=\sum_{d\mid n}d^m$ is replaced by the weighted sum
\[
\sigma^{(m)}_{\overline{P}}(n)=\sum_{d\mid n}c(d)\,d^m
=\sum_{d\mid n}d^m+\sum_{\substack{d\mid n\\ d\ \text{odd}}}d^m,
\]
i.e.\ one adds an ``odd--divisor'' contribution.
The moment sequence is then computed by the same convolution formula
\[
M_m^{\overline{\ }}(t)=\sum_{d=1}^{t}\sigma^{(m)}_{\overline{P}}(d)\,
\overline{p}(t-d),
\]
where $\overline{p}(n)$ is computed modulo $\ell$ from the recurrence for the
Euler product $\overline{P}(q)$ (or, equivalently, by forming the product
$\prod_{n\le N}(1+q^n)/(1-q^n)$ modulo $\ell$ up to the required truncation).
All subsequent steps are unchanged: scanning arithmetic progressions,
grouping candidate congruences, and certifying them by checking coefficients up
to the half--integral Sturm bound at a safe level $4\ell^2$.

\end{document}